\documentclass{amsart}
\usepackage[left=1in,right=1in,top=1.2in,bottom=1in]{geometry}
\usepackage[utf8]{inputenc}
\usepackage{setspace}
\usepackage{amsmath,amssymb}
\usepackage{comment}
\usepackage{xcolor}
\usepackage{url}
\usepackage[colorlinks=true,allcolors=blue,backref=page]{hyperref}
\usepackage[noabbrev,capitalize,nameinlink]{cleveref}
\usepackage{bbm}
\usepackage{tikz}

\newtheorem{theorem}{Theorem}

\newtheorem{proposition}[theorem]{Proposition}
\newtheorem{lemma}[theorem]{Lemma}

\newtheorem{corollary}[theorem]{Corollary}

\newtheorem*{claim*}{Claim}

\theoremstyle{remark}
\newtheorem*{remark*}{Remark}

\newcommand{\Xt}{\widetilde{X}}

\newcommand{\cE}{\mathcal{E}}
\newcommand{\cG}{\mathcal{G}}

\newcommand{\hypergeom}{\operatorname{HyperGeom}}
\newcommand{\Var}{\operatorname{Var}}
\newcommand{\Z}{\mathbb{Z}}

\newcommand{\eps}{\varepsilon}
\renewcommand{\epsilon}{\eps}
\renewcommand{\P}{\mathbb{P}}
\newcommand{\E}{\mathbb{E}}
\newcommand{\one}{\mathbbm{1}}

\begin{document}

\title[Comparability of random permutations in the strong Bruhat order]{Comparability of random permutations in the strong Bruhat order}
\author{Nicholas Christo}

\address{University of Illinois Chicago. Dept of Mathematics, Statistics and Computer science.}
\email{nchrist5@uic.edu}

\author{Marcus Michelen}
\address{Northwestern University, Dept of Mathematics.}
\email{michelen@northwestern.edu}

\begin{abstract}
    The (strong) Bruhat order for permutations provides a partial ordering defined as follows: two permutations are comparable if one can be obtained from the other by a sequence of adjacent transpositions that each increase the number of inversions by $1$.  Given two random permutations, what is the probability that they are comparable in the Bruhat order?  This problem was first considered in a 2006 work of Hammett and Pittel, which showed an exponential lower bound and a polynomial upper bound.   The lower bound was very recently improved to the subexponential bound of $\exp(-n^{1/2 + o(1)})$ by  Boretsky, Cornejo, Hodges, Horn, Lesnevich, and McAllister.  Hammett and Pittel predicted that the probability should decrease polynomially. We show that the probability decreases faster than any polynomial and is on the order of $\exp(-\Theta(\log^2 n))$.
\end{abstract}

\maketitle

\section{Introduction}

The \emph{(strong) Bruhat order} is a partial order in the symmetric group: two permutations are comparable if one can be obtained from the other by a sequence of adjacent transpositions that increase the number of inversions by $1$.  An easy-to-check criterion for comparability is the so-called $(0,1)$-matrix criterion: given permutations $\pi$ and $\tau$, let $M_{\pi}$ and $M_\tau$ be the corresponding permutation matrices.
Then the Bruhat order $\leq$ is defined via \cite[Thm.~2.1.5]{bjorner2005combinatorics}
\begin{equation}\label{eq:comparability-definition}
    \pi \leq \tau \iff \sum_{i \leq a,\, j\leq b} M_{\tau}(i,j) \leq \sum_{i \leq a,\, j \leq b} M_\pi(i,j) \quad \text{ for all } a, b \leq n\,.
\end{equation}

The Bruhat order is a central object in algebraic combinatorics and plays a leading role in the study of Schubert varieties.  This line of work began with Ehresmann's 1934 work \cite{ehresmann1934topologie} which included a characterization of the Bruhat order in terms of the increasing rearrangements of $\pi$ and $\tau$ that is equivalent to \eqref{eq:comparability-definition}.  There are many further equivalent descriptions of the Bruhat order \cite{ Bjorner1984, deodhar1977some, drake2004two, fulton1997young,  lascoux1996treillis} including criteria of a similar combinatorial flavor to \eqref{eq:comparability-definition} and of a more algebraic flavor.

In this note, we study the probability that two random permutations are comparable.  In particular, if we take $\pi$ and $\tau$ to be independent and uniformly chosen from the symmetric group $S_n$, what is the probability that $\pi \leq \tau$?  This problem was introduced in a 2006 work of Hammett and Pittel \cite{hammett2008often} which showed \begin{equation*}
    c(.708)^n \leq \P(\pi \leq \tau) \leq C n^{-2}
\end{equation*}
for constants $C,c > 0$.  
Very recently\footnote{The work \cite{boretsky2026comparability} appeared on arXiv while the present work was nearing completion.  The main focus of the work \cite{boretsky2026comparability} is showing that if one considers a different order on $S_n$ known as the \emph{weak Bruhat order} $\leq_W$ then $\P(\pi \leq_W \tau) = \exp(-(1/2 + o(1))n\log n)$.}, Boretsky-Cornejo-Hodges-Horn-Lesnevich-McAllister \cite{boretsky2026comparability} improved the lower bound to the form $\exp(-c\sqrt{n} \log^{3/2}n)$. In their work, Hammett and Pittel stated that ``Empirical estimates...suggest that $\P(\pi \leq \tau)$ is of order $n^{-(2+\delta)}$ for $\delta$ close to $0.5$.''   Our main theorem shows that in fact the probability decreases faster than any polynomial at the rate $\exp(-\Theta(\log^2 n))$: \begin{theorem}\label{th:main}
    Let $\pi,\tau \in S_n$ be independently and uniformly chosen at random.  There are constants $C,c> 0$ so that for $n$ sufficiently large we have $$ \exp\left(- C \log^2 n \right) \leq \P(\pi \leq \tau) \leq \exp\left(- c \log^2 n \right)\,.$$
\end{theorem}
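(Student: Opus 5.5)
Write $F_\pi(a,b)=\#\{i\le a:\pi(i)\le b\}$, so by \eqref{eq:comparability-definition} the event $\pi\le\tau$ is $\{F_\tau(a,b)\le F_\pi(a,b)\ \forall a,b\}$. The difference $D=F_\pi-F_\tau$ is a two-parameter random field with $\E D=0$ and fluctuations of order $\sqrt{ab/n}\,(1-a/n)^{1/2}(1-b/n)^{1/2}$ at $(a,b)$, like a difference of two Brownian-sheet-type (Kiefer) processes. The event is a persistence event: $D\ge0$ on the whole grid. The key heuristic is that this field decorrelates over dyadic scales in each coordinate. Near the corner $(0,0)$, the blocks $(a,b)\in[2^k,2^{k+1}]\times[2^\ell,2^{\ell+1}]$ with $2^{k+\ell}\gtrsim n$ (so the region is nonempty on average) give about $\log^2 n$ roughly independent ``cells''. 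Each cell must stay nonnegative with probability bounded below $1$ but also bounded below by a constant after conditioning. This suggests $\exp(-\Theta(\log^2 n))$ on both sides, and my plan is to make this rigorous in the two directions separately.

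\textbf{Upper bound.} I restrict to a family of about $c\log^2 n$ test points $(a_{k},b_{\ell})$, with $a_k=2^k$, $b_\ell=2^\ell$, and $k+\ell\ge \log_2 n+K$ for a large constant $K$, staying in the corner $a,b\le n/2$. At such points $D(a,b)$ is approximately Gaussian with variance $\asymp ab/n\ge 2^K$. I reveal $\pi,\tau$ gradually: condition on the row/column counts at coarser scales, and use the fact that the increment of $F_\pi$ on a new dyadic rectangle, given the past, is hypergeometric with variance comparable to the full variance. For each test point, conditional on the sigma-algebra generated by the earlier ones, $\P(D(a_k,b_\ell)\ge 0\mid\cdot)\le 1-\delta$ for a uniform $\delta>0$. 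This holds on a high-probability good event where the conditional mean of $D$ is at most $O(1)$ standard deviations, and the bad events are controlled by hypergeometric concentration and a union bound. Multiplying gives $(1-\delta)^{c\log^2 n}=\exp(-c'\log^2 n)$. The main obstacle is making the conditional anti-concentration uniform: the past can push the mean positive. I would handle this by ordering the points so that each new one has fresh independent randomness whose variance dominates, for example by taking $k$ and $\ell$ along a sparse subgrid with gaps $K$. Then the new increment's variance exceeds the correlated part by a factor $2^{K}$, and the probability of a large positive shift has tails summable over the $\log^2 n$ points.

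\textbf{Lower bound.} I construct an explicit event of probability $\exp(-C\log^2 n)$ forcing $D\ge0$. A natural route uses the Ehresmann/tableau criterion, or more directly positive correlation. The events $\{F_\pi(a,b)\ge F_\tau(a,b)\}$ should be increasing in a suitable sense, so the FKG/Harris inequality for the joint monotone structure could apply. If that fails, I would use a block construction. Partition $[n]^2$ into dyadic regions near both the $(0,0)$ corner and the far corners (by symmetry $D(a,b)$ is also controlled from the $(n,n)$ side). Then require, at each of the $O(\log^2 n)$ scale pairs, that the counts of $\pi$ points exceed their mean and the counts of $\tau$ points fall below it by a margin of a constant times the standard deviation. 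Each such requirement costs a constant factor, conditionally on the previous ones, by a local CLT for hypergeometrics. The margins are chosen summable, say $\sigma\cdot 2^{-\epsilon|\cdot|}$-adjusted, so that fluctuations at points between test points are dominated by the accumulated margin; a chaining bound gives this with conditional probability at least $1/2$. Near the axes, where $ab\lesssim n$, I would instead force a deterministic small pattern: $\pi$ and $\tau$ fix a few points making $F_\pi\ge F_\tau$ exactly, at cost $n^{-O(1)}$. The product is $\exp(-C\log^2 n)$. I expect the interpolation step, controlling $D$ between test points via chaining with the accumulated margins, to be the delicate part here.
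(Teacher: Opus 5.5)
Your heuristic of about $\log^2 n$ roughly independent dyadic cells is the right one. But both halves of the plan have a gap exactly where the heuristic has to become a bound.

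\textbf{Upper bound.} The aggregation step fails as written.
\begin{itemize}
\item With a fixed gap $K$, the part of $D(a_k,b_\ell)$ inherited from earlier test points has standard deviation about $2^{-K/2}$ times that of the fresh increment.
\item So at each test point, the probability that the past shifts the conditional mean by more than a constant number of fresh standard deviations is a constant $p_K\approx e^{-c2^K}$. By scale invariance this is essentially the same constant at every test point, so these tails are not summable.
\item The shifts are only weakly dependent, so the event that \emph{all} $\Theta(\log^2 n)$ test points are good has probability tending to $0$. A union bound therefore cannot give the additive error $e^{-c\log^2 n}$ your product argument needs.
\item Forcing it to (taking $2^K\gtrsim\log^2 n$) leaves only about $(\log n/\log\log n)^2$ test points, and yields only $\exp(-c\log^2 n/(\log\log n)^2)$.
\end{itemize}
What is missing is a large-deviation estimate: more than $\epsilon\log^2 n$ test points should be bad only with probability $e^{-c\log^2 n}$. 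This would then feed a supermartingale bound $\E\prod_m \one_{A_m}(1-\delta)^{-\one_{B_m}}\le 1$, where the $B_m$ are the predictable good events. That estimate is not routine. A single large coarse-scale fluctuation spoils many later test points, and near the hyperbola $ab\asymp n$ the tails are only exponential.

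The paper avoids all of this. It replaces the $n^{7/12}\times n^{7/12}$ corner by i.i.d.\ Bernoulli$(1/n)$ entries through a likelihood-ratio comparison. It then couples $\sqrt n\times\sqrt n$ block sums to a Gaussian array with $O(\log^2 n)$ error, using Rio's two-dimensional KMT theorem. Finally it applies the Li--Shao bound for Gaussian vectors with summable correlations, at geometric points of variance at least $\log^6 n$. That bound handles the multi-point dependence in one stroke.

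\textbf{Lower bound.} Your primary route presumes an FKG/Harris inequality, but neither applies off the shelf. The uniform measure on $S_n$ is not a product measure, and the Bruhat order is not a lattice. The statement you need is exactly the Johnson--Leader--Long theorem, extended by conditioning on one coordinate to $S_n\times S_n$ for events increasing in $\pi$ and decreasing in $\tau$.

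Your fallback does not replace it. After imposing $\Theta(\log^2 n)$ margin requirements, you are conditioning on an event of probability $e^{-\Theta(\log^2 n)}$. An unconditional chaining estimate gives no control ``with conditional probability at least $1/2$'' on such an event.

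The near-axes step is also wrong as stated. Fixing a bounded number of entries cannot make $F_\pi\ge F_\tau$ hold deterministically on $\{ab\lesssim n\}$, because the remaining randomness still puts in expectation about $\log n$ points of $\tau$ there.

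With the correlation inequality available, the paper proceeds in four steps:
\begin{itemize}
\item It reduces to $\P(\cE_1)^4$ by symmetry.
\item It fixes the first and last $C\log n$ values of $\pi$ and $\tau$, at cost $e^{-O(\log^2 n)}$, which buys a uniform $C\log n$ margin.
\item It splits the quadrant into $\Theta(\log^2 n)$ boxes of the form $[x,5x/4]\times[y,5y/4]$.
\item It shows each box event $\min Z\ge -C\log n$ has probability bounded below. Freedman's inequality and chaining control the maximal fluctuation, and the $O(\log n)$ this costs is absorbed by the margin. A conditional CLT given the box's frame then pushes $Z(x,y)$ up by a large multiple of $\sqrt{xy/n}$.
\end{itemize}
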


The main idea is to define $$Z(a,b) = \sum_{i\leq a,\, j \leq b} \left(M_\pi(i,j) - M_\tau(i,j)\right)$$ then \eqref{eq:comparability-definition} may be rewritten as \begin{equation}\label{eq:intro-persistence}
    Z(a,b) \geq 0 \text{ for all }a, b \leq n\,.
\end{equation}

We treat \eqref{eq:intro-persistence} as a two-dimensional persistence event.  As a useful point of comparison, a classical persistence event is asking for the probability that a simple random walk is non-negative for the first $n$ steps.  A simple random walk has only one time index, while \eqref{eq:intro-persistence} has two, and so there is not as simple of a combinatorial approach as in the case of a simple random walk.  However, two-dimensional persistence problems for Gaussian processes---such as the Brownian sheet---have received a fair amount of attention.  Our approaches to the upper and lower bounds are inspired by these methods from probability theory, and in fact our proof of the upper bound uses some of the machinery for analyzing Gaussian processes to bound our problem \eqref{eq:intro-persistence} in terms of an analogous Gaussian problem.

We prove the upper and lower bounds separately in \cref{sec:UB} and \cref{sec:LB} as \cref{pr:upper-bound} and \cref{prop:LB} respectively.  We begin with a sketch of our arguments along with a more detailed conjecture about the behavior of $\P(\pi \leq \tau)$ (see \eqref{eq:conjecture}).

\subsection{Proof outline}

To first see heuristically that $\exp(-\Theta(\log^2n ))$ is the correct probability for the event in \eqref{eq:intro-persistence}, note that $Z(a,b)$ is a mean-zero random variable with variance $\approx ab/n$.  Further, while $Z(a,b)$ is not strictly a sum of independent random variables, one may show that $Z(a,b)$ obeys a central limit theorem and so is close to a Gaussian random variable of variance $\approx ab/n$.  As a consequence, $\P(Z(a,b) \geq 0) \approx 1/2$ for each fixed $(a,b)$ (once $ab/n \gg 1)$.  When $(a_1,b_1)$ and $(a_2,b_2)$ are close together, the random variables $Z(a_1,b_1)$ and $Z(a_2,b_2)$ are quite correlated.  The idea is to find a set of $(a,b)$ so that all pairwise correlations are uniformly bounded away from $1$.  In particular, if we consider $\mathcal{S}_0 = \{\rho^i : i \in \{0,1,2,\ldots \}\} \cap [n]$ for some fixed integer $\rho \geq 2$, then for all pairs $(a_1,b_1),(a_2,b_2) \in \mathcal{S}_0^2$ with $(a_1,b_1) \neq (a_2,b_2)$ we have \begin{equation}\label{eq:upper-bound-corr-intro}
    \frac{|\E[Z(a_1,b_1) Z(a_2,b_2)]|}{\sqrt{\Var(Z(a_1,b_1)) \Var(Z(a_2,b_2))}} \leq 1 - \delta
\end{equation}
for some $\delta >0$ depending on $\rho$.  Here one might imagine that across all $(a,b) \in \mathcal{S}_0^2$, the random variables $(Z(a,b))$ are sufficiently independent so that \begin{equation}\label{eq:factor-intro}
\P(Z(a,b) > 0 \text{ for all }(a,b) \in \mathcal{S}_0^2) \approx \prod_{(a,b) \in \mathcal{S}_0^2} \P(Z(a,b) > 0) = \exp(-\Theta( |\mathcal{S}_0^2|)) = \exp(-\Theta(\log^2n)) \,.\end{equation}

While such a strong approximate independent statement does not precisely hold, this heuristic provides the correct shape of the proof of both the upper bound and lower bound, whose technical details are quite different.  

For the upper bound, it turns out that if $(Z(a,b))_{(a,b) \in \mathcal{S}_0^2}$ is a Gaussian vector, then an assumption similar to \eqref{eq:upper-bound-corr-intro} \emph{does} imply an upper bound similar to \eqref{eq:factor-intro}, with a loss that is exponential in $|\mathcal{S}_0|^2.$  This is due to a theorem of Li and Shao \cite{li2004lower} which we restate in \cref{th:general-gaussian-upper-bound}.  However, it is a highly nontrivial task to approximate $(Z(a,b))_{(a,b) \in \mathcal{S}_0^2}$ with Gaussians simultaneously, especially since $Z(a,b)$ are not sums of independent and identically distributed (i.i.d.) random variables. 

Our first step towards a Gaussian comparison is to show that if we only consider submatrices of $M_\pi(i,j)$ up to $N = \lfloor n^{7/12} \rfloor,$ then the distribution of $M_\pi$ is essentially the same as an $N \times N$ random matrix whose entries are i.i.d.\ $\mathrm{Bernoulli}(1/n)$ variables. In order to understand our choice of $N$, note that the expected number of $1$'s in the first $N \times N$ submatrix of $M_\pi$ is $N^2 / n$.  If we hope to show that \eqref{eq:intro-persistence} is rare, we need to have that the number of $1$'s is diverging, and so we require $N \gg n^{1/2}$.  Note however that in an $N \times N$ matrix with $\mathrm{Bernoulli}(1/n)$ entries, the number of rows or columns with at least two $1$'s is of order
$$ \approx N \cdot(N / n)^2 = N^3 / n^2$$
and so we need $N \ll n^{2/3}$.   Balancing the requirements of $N \gg n^{1/2}$ and $N \ll n^{2/3}$ yields the choice of $N = \lfloor n^{7/12} \rfloor$.  We show that we may replace an $N \times N$ submatrix of $M_\pi$ and $M_\tau$ with i.i.d.\ $\mathrm{Bernoulli}(1/n)$ entries, which thus allows us to view $Z(a,b)$ as a sum independent random variables.  This is performed in \cref{lem:Bernoulli-comparison}. 

Finally, we lean on a strong approximation theorem of Rio \cite{rio1995strong} (see \cref{th:KMT}) which allows us to replace the underlying random variables with standard Gaussian random variables, once we group enough random variables together to attain variance of order $1$.  The work of Rio \cite{rio1995strong} may be viewed as a two-dimensional analogue of the Koml\'os-Major-Tusnady (KMT) coupling, which shows that a random walk and Brownian motion may be coupled together so the entire trajectories remain close.  Altogether, this completes the upper bound.

For the lower bound, we again will consider dyadic scales.  However, we have an additional tool that is helpful for showing a version of \eqref{eq:factor-intro}: each event $Z(a,b) > 0$ is an \emph{increasing} event (in the Bruhat order) in the permutation $\pi$ and \emph{decreasing} in $\tau$.  A correlation inequality of Johnson-Leader-Long \cite{johnson2020correlation} (see \cref{thm:JLL-FKG}) states that such events are \emph{positively correlated} in the sense that $\P(A \cap B) \geq \P(A) \P(B).$ This may be understood as a version of the Fortuin-Kasteleyn-Ginibre (FKG) inequality for the Bruhat order.  

Our first use of this correlation inequality is to show $$\P(Z(a,b) \geq 0 \text{ for all }a,b \leq n) \geq \P(Z(a,b) \geq 0\text{ for all }a,b \leq \lceil n/2 \rceil)^4$$
(see \cref{lem:E-equiprobable}).  For a lower bound on the resulting probability, we will give ourselves $\Omega(\log n)$ of wiggle room at a cost of $\exp(-\Theta(\log^2n))$ by conditioning on the first and last $\Theta(\log n)$ entries of the permutations underlying $Z$.  After breaking into dyadic rectangles and iteratively using the correlation inequality \cref{thm:JLL-FKG}, we see it is enough to show 
    \begin{equation*}
        \P\left(\min_{(a,b) \in [x,5x/4] \times [y,5y/4]} Z(a,b) \geq - C \log n\right) \geq c\,.
    \end{equation*}
uniformly in $x,y \leq n/2.$  If for a set $S \subset [n] \times [n]$ we write $Z(S) = \sum_{(i,j) \in S} (M_\pi(i,j) - M_\tau(i,j))$ then for $(a,b) \in [x,5x/4] \times [y,5y/4]$ we can break up $$  Z(a,b) = Z(x,y) + Z([x] \times [y,b]) + Z([x,a] \times [y]) + Z([x,a] \times [y,b])\,.$$

The idea is that typically the maximum of the last three terms is order $\sqrt{xy/n} + \log n$.  Conditioning on a typical event for these last three terms, we then may use a central limit theorem to show that $Z(x,y)$ is larger than any fixed multiple of $\sqrt{xy/n}$ with probability bounded below.  The central challenge is showing that the maximum of the last three terms is not too large.  The middle two terms are easily handled by Freedman's inequality for martingales (see \cref{lem:upper-bound-sides}). The last requires a more delicate approach.

To show $$\E \max_{(a,b)  \in [x,(5/4)x] \times [y,(5/4)y]} |Z([x,a] \times [y,b])| \leq C\left(\sqrt{x y / n} + \log n\right)$$
we use a \emph{chaining} approach, a technique used to bound the expected maximum of a stochastic process.  We now provide a quick description of how chaining works in our context; for a more extensive background on chaining, see \cite[Chapter 8]{vershynin2018high} and the references therein.  

For simplicity of notation, suppose we are interested in bounding $\E \max_{(a,b)  \in [x] \times [y]} |Z(a,b)|\,. $
The strategy is to break up the set of $(a,b) \in [x] \times [y]$ into dyadic scales, e.g.\ the $k$th scale consists of all $(i\cdot x/2^k, j \cdot y/2^k)$ for $i \leq 2^k, j \leq 2^k$.  The idea is to approximate each point $(a,b)$ at each dyadic scale and note that the difference between the approximations at scales $k$ and $k+1$ is a random variable of variance $O(xy/(n2^{k}))$.  Further, the number of points at  scale $k$  is increasing exponentially.  The exponential decrease in variance leads to a strong enough tail bound (due to Freedman's inequality) that one can union bound over all points at scale $k$ and $k+1$ to uniformly control this error.

We note two subtle points: one reason why chaining works in this case is implicitly because the set of rectangles has \emph{bounded VC (Vapnik–Chervonenkis) dimension}, which is an assumption on the complexity on the set of indices that allows one to construct efficient nets; in this case, our efficient nets are simply the dyadic rectangles.  Chaining in general works well on classes of sets of bounded VC dimension and we refer the reader to \cite[Chapter 8.3]{vershynin2018high} for more context.  The other subtle point is that since we are dealing with sums of sparse random variables, the tail behavior of $Z(a,b)$ is Poisson-like rather than sub-gaussian: in some regimes the tail is gaussian-like and in others it is only exponential.  This complicates the chaining picture slightly.  Essentially, we only chain down to the scale at which the exponential tail overtakes the sub-gaussian tail; at this point, we use the exponential tail to obtain a logarithmic error, which we are able to tolerate due to our $\Omega(\log n)$ wiggle room.  We handle the chaining argument in \cref{sec:chaining}.

As a final note, we speculate on the asymptotic behavior of $\P(\pi \leq \tau).$  For the Brownian sheet $B(s,t)$ it was proven by Molchan \cite{molchan2022persistence} that there is some $\psi > 0$ so that \begin{equation} \label{eq:brownian-sheet-persistence}
    \P\left(\min_{s,t \leq T} B(s,t) \geq -1 \right) = \exp\left( -(\psi + o(1))\log^2 T\right)\,.
\end{equation}

We conjecture that in fact \begin{equation}\label{eq:conjecture}
    \P(\pi \leq \tau) = \exp\left(- (\psi + o(1) \right)\log^2 n)
\end{equation}
for the same $\psi$ as in \eqref{eq:brownian-sheet-persistence}.  As a justification for \eqref{eq:conjecture}, we first suggest heuristically that \begin{equation*}
    \P(\pi \leq \tau) = \exp(-o(\log^2n)) \P( Z(a,b) \geq 0 \text{ for all }a,b \in [n^{1 - o(1)}])^4\,.
\end{equation*}
Since $Z(n^{1 - o(1)},n^{1 - o(1)})$ has variance on the order of $n^{1 + o(1)}$,  one expects that \begin{equation} \label{eq:conjecture-strong-approx}
    (Z(a,b))_{(a,b) \in [n^{1 + o(1)}] \times  [n^{1 + o(1)}]} \approx (B(s,t))_{(s,t) \in [n^{1/2 + o(1)}] \times [n^{1/2 + o(1)}]}\,.
\end{equation}

In fact, our proof of the upper bound of \cref{th:main} can be understood as showing that the approximate distributional identity in \eqref{eq:conjecture-strong-approx} holds up to the smaller scale of $(a,b) \in [n^{7/12}] \times [n^{7/12}]$.
Since $\log^2 (n^{1/2 + o(1)}) = (1/4 + o(1))\log^2 n$, this provides a heuristic justification for our conjecture \eqref{eq:conjecture}.

\section{Preliminaries}
Let $\pi, \tau \in S_n$ be independently and uniformly chosen at random.  We will often write $\P_n$ in order to denote the dependence on $n$.  Let $M_\pi$ and $M_\tau$ denote the permutation matrices corresponding to $\pi$ and $\tau$.
For a set $A \subset [n] \times [n]$ let $X(A) = \sum_{(i,j) \in A} M_\pi(i,j)$, $Y(A) = \sum_{(i,j)\in A} M_\tau(i,j)$ and $Z(A) = X(A) - Y(A)$.  We will also write $X(i,j) = X([i] \times [j])$ and define $Y(i,j)$ and $Z(i,j)$ similarly.  
Then \cref{th:main} is equivalent to showing \begin{equation*}
    \P_n\left( Z(i,j) \geq 0 \text{ for all } i,j \in [n] \right) = \exp\left(-\Theta(\log^2 n) \right)\,.
\end{equation*}

For a set $A \subset [n] \times [n]$ we have $\E X(A) = |A|/n$ and so we define $\widetilde{X}(A) = X(A) - |A|/n$ and define $\widetilde{Y}(A)$ analogously.   As a result, for each fixed $A$ we have $\E \widetilde{X}(A) = 0$.   

For each rectangle $A$, we will see that $X(A)$ is a \emph{hypergeometric} random variable.  For parameters $N$ and $A, B \leq N$, a hypergeometric random variable $\xi \sim \hypergeom(N,B,A)$ may be defined combinatorially as follows: given $N$ total objects with $A$ red objects and $N-A$ blue objects, pick $B$ objects without replacement and let $\xi$ be the number of red objects picked.  For $\xi \sim \hypergeom(N,B,A)$ we recall \begin{equation}\label{eq:hypergeom-mean-variance}
    \E \xi = \frac{AB}{N}, \qquad \Var(\xi) = \frac{AB(N-A)(N - B)}{N^2 (N - 1)}\,.
\end{equation}

We first claim that for any box $B$, $X(B)$ is a hypergeometric random variable.  Throughout, for a permutation matrix $P$ and a set $S \subset [n] \times [n]$ we write $P_{S}$ to be the portion of the matrix indexed by $S$ and $|P_S|$ to be the number of ones in $P_S$.

\begin{lemma}\label{lem:count=hypergeom}
	Let $P$ be a random permutation matrix. For any box $B = (a_1,a_2] \times (b_1,b_2]$ we have that $$|P_B| \sim \hypergeom(n,b_2 - b_1, a_2 - a_1)\,.$$
\end{lemma}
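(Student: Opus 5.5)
The plan is to read the count off directly from the combinatorial definition of the hypergeometric distribution. Write $P = M_\sigma$ with $\sigma$ uniform on $S_n$, using the convention that $P(i,\sigma(i)) = 1$; the opposite convention only swaps the roles of rows and columns and is handled by the same argument. Put $R = (a_1,a_2]$, the row set, of size $a_2 - a_1$, and $C = (b_1,b_2]$, the column set, of size $b_2 - b_1$. The row $i$ contributes a one to $P_B$ exactly when $i \in R$ and $\sigma(i) \in C$. So
\[
|P_B| = |\{i \in R : \sigma(i) \in C\}| = |R \cap \sigma^{-1}(C)|.
\]

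The key step is that $\sigma^{-1}(C)$ is a uniformly random subset of $[n]$ of size $b_2 - b_1$. To see this, fix any set $T \subset [n]$ with $|T| = |C|$. The permutations $\sigma$ with $\sigma^{-1}(C) = T$ are exactly those mapping $T$ bijectively onto $C$ and $[n]\setminus T$ bijectively onto $[n]\setminus C$. There are $|C|!\,(n-|C|)!$ of them, and this number does not depend on $T$. Hence $\sigma^{-1}(C)$ is uniform among all $\binom{n}{|C|}$ subsets of size $|C|$.

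This matches the combinatorial definition with $N = n$ objects. The red objects are the $a_2 - a_1$ elements of $R$, and we draw $b_2 - b_1$ objects without replacement, namely the set $\sigma^{-1}(C)$. The number of red objects drawn is $|R \cap \sigma^{-1}(C)|$, which gives $|P_B| \sim \hypergeom(n, b_2-b_1, a_2-a_1)$ in the paper's parameter order. As a check, the mean is $(a_2-a_1)(b_2-b_1)/n$, which agrees with \eqref{eq:hypergeom-mean-variance} and with $\E X(A) = |A|/n$.

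There is no real obstacle here. The only care needed is to keep the parameter order consistent with the paper's convention, which is (population, number drawn, number of red objects). The hypergeometric law is symmetric in these last two arguments, so the order does not affect the distribution in any case.
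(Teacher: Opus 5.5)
Your proof is correct and is essentially the paper's argument: the paper views the $b_2-b_1$ columns of $P$ indexed by $(b_1,b_2]$ as coordinate vectors drawn uniformly without replacement from the $n$ possible ones, with the red objects being those whose $1$ lies in a row of $(a_1,a_2]$. Your set $\sigma^{-1}(C)$ is exactly that sample, and your count of $|C|!\,(n-|C|)!$ permutations supplies the uniformity justification that the paper leaves implicit.
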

\begin{proof}
    Consider the $n$ possible columns given by $n$ coordinate vectors.  The set of columns giving $B$ consists of $b_2 - b_1$ columns sampled uniformly without replacement from these $n$ total.  A column contains a $1$ if it is from the $a_2 - a_1$ columns containing a $1$ in a row in $(a_1,a_2]$.  This precisely matches the definition of the hypergeometric distribution.
\end{proof}

We will require tail bounds for hypergeometric random variables as well as a central limit theorem.  We will ultimately deduce a Bernstein-like tail bound from Freedman's inequality.  We recall a version of Freedman's inequality that not only bounds a martingale but also bounds the running maximum of a martingale \cite[(1.6)]{freedman1975tail}:

\begin{theorem}[Freedman's inequality] \label{th:freedman}
    Let $(X_k,\mathcal{F}_k)_{k \geq0}$ be a martingale with increments $\xi_k = X_k - X_{k-1}$.  Suppose that $|\xi_k| \leq M$ almost surely.  Define the quadratic variation $$V_n = \sum_{k = 1}^n \E[ \xi_k^2 \,|\,\mathcal{F}_{k-1}]\,.$$ Then for all $t, s > 0$ we have \begin{equation*}
        \P(\exists~k \leq n : |X_k - X_0| \geq t \text{ and } V_k \leq s^2) \leq 2\exp\left(-\frac{t^2}{2(s^2 + Mt/3)} \right)\,.
    \end{equation*}
\end{theorem}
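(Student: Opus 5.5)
This is Freedman's classical inequality, quoted from \cite{freedman1975tail}, so in the paper it may simply be cited. If I were to reprove it, I would use the standard exponential supermartingale argument, applied separately to the upper and lower deviations. By symmetry (replace $X_k$ by $-X_k$, which has the same $M$ and the same $V_k$) it suffices to bound $\P(\exists\, k \leq n : X_k - X_0 \geq t \text{ and } V_k \leq s^2)$ by $\exp(-t^2/(2(s^2+Mt/3)))$. A union bound over the two signs then gives the factor $2$.

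\emph{Supermartingale step.} Fix $\lambda \in (0, 3/M)$ and set $g(\lambda) = (e^{\lambda M} - 1 - \lambda M)/M^2$. The function $x \mapsto (e^x - 1 - x)/x^2$ is increasing, and $\lambda \xi_k \leq \lambda M$. Hence
\[
e^{\lambda \xi_k} \leq 1 + \lambda \xi_k + g(\lambda)\,\xi_k^2 .
\]
Taking conditional expectations and using $\E[\xi_k \,|\, \mathcal{F}_{k-1}] = 0$ together with $1+u \leq e^u$ gives
\[
\E[e^{\lambda \xi_k} \,|\, \mathcal{F}_{k-1}] \leq \exp\bigl(g(\lambda)\, \E[\xi_k^2 \,|\, \mathcal{F}_{k-1}]\bigr).
\]
Since $V_k$ is predictable, this shows that $W_k = \exp(\lambda(X_k - X_0) - g(\lambda) V_k)$ is a nonnegative supermartingale with $W_0 = 1$.

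\emph{Stopping step.} Let $T$ be the first $k \leq n$ with $X_k - X_0 \geq t$ and $V_k \leq s^2$, and set $T = n$ if no such $k$ exists. This is a bounded stopping time, so optional stopping gives $\E W_T \leq 1$. On the event in question we have $W_T \geq \exp(\lambda t - g(\lambda) s^2)$. By Markov's inequality the probability is therefore at most $\exp(-\lambda t + g(\lambda) s^2)$.

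\emph{Optimization step.} Compare the power series term by term, using $k! \geq 2\cdot 3^{k-2}$. This gives $e^x - 1 - x \leq \frac{x^2}{2(1 - x/3)}$ for $0 \leq x < 3$, and hence $g(\lambda) \leq \frac{\lambda^2}{2(1-\lambda M/3)}$. Now choose $\lambda = t/(s^2 + Mt/3)$, which indeed lies in $(0, 3/M)$. With this choice $1 - \lambda M/3 = s^2/(s^2 + Mt/3)$. The exponent $-\lambda t + \lambda^2 s^2/(2(1-\lambda M/3))$ then simplifies to exactly $-t^2/(2(s^2 + Mt/3))$.

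The only mildly delicate points are this elementary Bernstein-type bound on $g$ and the observation that the constraint $V_k \leq s^2$ is checked at the stopping time itself. The latter is why the running-maximum version follows at no extra cost.
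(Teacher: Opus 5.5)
Your proof is correct and complete. The paper gives no argument for this statement; it simply quotes \cite[(1.6)]{freedman1975tail}, so your write-up is a self-contained substitute for the citation rather than a different route to something proved in the text.

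You follow the standard route. You build the exponential supermartingale $W_k=\exp(\lambda(X_k-X_0)-g(\lambda)V_k)$, apply optional stopping at the first time the event occurs, and then choose $\lambda$.

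The one place you differ from the most common presentation is the last step. The usual approach optimizes $\lambda$ exactly, taking $e^{\lambda M}=1+Mt/s^2$. This gives the Bennett-type bound $\exp\bigl(-\tfrac{s^2}{M^2}h(\tfrac{Mt}{s^2})\bigr)$ with $h(u)=(1+u)\log(1+u)-u$, and the stated form then follows from $h(u)\ge \frac{u^2}{2(1+u/3)}$. You instead first relax $g(\lambda)\le \frac{\lambda^2}{2(1-\lambda M/3)}$ and then take $\lambda=t/(s^2+Mt/3)$. This lands directly on the $Mt/3$ constant the paper uses in \cref{th:Bernstein}. You give up the slightly sharper Bennett bound, which the paper never needs.

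The details check out:
\begin{itemize}
\item $(e^x-1-x)/x^2$ is increasing on all of $\mathbb{R}$. This matters because $\xi_k$ can be negative, and each one-sided bound uses only $\xi_k\le M$.
\item $V_k$ is $\mathcal{F}_{k-1}$-measurable, so $W_k$ is a supermartingale.
\item $T$ is a bounded stopping time, and on the event both $X_T-X_0\ge t$ and $V_T\le s^2$ hold at $T$. This is exactly what gives the running-maximum form that \cref{lem:upper-bound-sides} relies on.
\item The identity $1-\lambda M/3=s^2/(s^2+Mt/3)$ makes the exponent come out exactly as stated.
\end{itemize}

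The only loose end is cosmetic: $g$ is undefined when $M=0$. In that case $X_k\equiv X_0$, so the event is empty and the bound is trivial.
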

From Freedman's inequality, a bound on hypergeometric random variables follows quickly:

\begin{corollary}\label{th:Bernstein}
    Let $a\leq b \leq 3n/4$ and set $X \sim \hypergeom(n,b,a).$  Then \begin{align*}
        \P(|X - \E X| \geq t) \leq 2 \exp\left(- \frac{1}{16} \min\left\{ \frac{t^2}{ab/n}, t\right\}\right)\,.
    \end{align*}
\end{corollary}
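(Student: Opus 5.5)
We realize $X \sim \hypergeom(n,b,a)$ as the endpoint of a Doob martingale built from sampling without replacement, and then apply \cref{th:freedman}. Take an urn of $n$ objects, $a$ of them red, and draw $b$ objects one at a time without replacement. Let $\mathcal{F}_k$ be the $\sigma$-algebra generated by the first $k$ draws, and set $X_k = \E[X \mid \mathcal{F}_k]$. Then $X_0 = \E X = ab/n$ and $X_b = X$.

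Next we compute the increments explicitly. Let $R_k$ be the number of red objects among the first $k$ draws. Given $\mathcal{F}_k$, the remaining $b-k$ draws are a uniform sample from $n-k$ objects, of which $a-R_k$ are red. Therefore
\begin{equation*}
X_k = R_k + (b-k)\frac{a - R_k}{n-k}.
\end{equation*}
Write $p_k = (a-R_{k-1})/(n-k+1)$ for the conditional probability that draw $k$ is red, and $\eta_k = \one\{\text{draw } k \text{ red}\} - p_k$. A direct computation gives
\begin{equation*}
\xi_k = X_k - X_{k-1} = \eta_k\left(1 - \frac{b-k}{n-k}\right) = \eta_k \frac{n-b}{n-k}.
\end{equation*}
Since $k \le b$, the factor $(n-b)/(n-k)$ lies in $[0,1]$. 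Hence $|\xi_k| \le 1$, so we may take $M=1$. The same bound gives
\begin{equation*}
\E[\xi_k^2 \mid \mathcal{F}_{k-1}] \le p_k(1-p_k) \le p_k.
\end{equation*}

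The main obstacle is the quadratic variation: $p_k$ is random, so $V_b$ has no deterministic bound of order $ab/n$. We handle this with a stopping argument built into \cref{th:freedman}, which bounds $\P(\exists k: |X_k-X_0|\ge t,\ V_k \le s^2)$. Choose $s^2 = C ab/n$ with a suitable constant $C$ (say $C=4$), and use $b \le 3n/4$, so that $n-k+1 \ge n/4$. On the event that $V_b > s^2$, the sum $\sum_{k\le b} p_k$ exceeds $C ab/n$. But $p_k \le 4(a-R_{k-1})/n \le 4a/n$ deterministically, so $\sum_{k \le b} p_k \le 4ab/n$, and therefore $V_b \le 4ab/n$ always. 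So the stopping argument is not needed, and we take $s^2 = 4ab/n$.

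Finally, Freedman's inequality gives
\begin{equation*}
\P(|X - \E X| \ge t) \le 2\exp\left(-\frac{t^2}{2(4ab/n + t/3)}\right).
\end{equation*}
We split into two cases according to which term in the denominator dominates. If $t \le 4ab/n$, the denominator is at most $2(4ab/n + 4ab/(3n)) \le 16\,ab/n$, so the exponent is at least $t^2/(16\,ab/n)$. Otherwise $4ab/n < t$, so the denominator is at most $2(t + t/3) \le 16t/3$, and the exponent is at least $3t/16 \ge t/16$. In both cases the exponent is at least $\frac{1}{16}\min\{t^2/(ab/n), t\}$, which gives the claimed bound. The hypothesis $a \le b$ is not used; only $b \le 3n/4$ matters.
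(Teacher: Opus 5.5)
Your proposal is correct and is essentially the paper's proof: the same Doob martingale for sampling without replacement, the bounds $|\xi_k|\le 1$ and $V_b\le 4ab/n$ (using $b\le 3n/4$), then Freedman's inequality and an elementary case split. Your explicit formula $\xi_k=\eta_k\,(n-b)/(n-k)$ makes precise the paper's looser description of the increments as centered Bernoulli variables, and, as you note yourself, the stopping-argument detour is unnecessary because $V_b\le 4ab/n$ holds deterministically.
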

\begin{proof}
    Recall that we may sample $X$ by drawing $b$ objects total from a collection of $n$ objects consisting of $a$ red objects and $n - a$ blue objects.  Set $\mathcal{F}_j$ to be the $\sigma$-field generated by the first $j$ draws and define $X_k = \E[X\,|\,\mathcal{F}_k]$.  Note that $X_{k} - X_{k-1}$ is given by a centered Bernoulli variable, whose success parameter $p_k$ is at most $a/(n-b) \leq \frac{4a}{n}.$  This shows $|X_{k} - X_{k-1}| \leq 1$ and that $V_b \leq \frac{4ab}{n}$ almost surely.  Applying \cref{th:freedman} shows \begin{equation*}
        \P(|X - \E X| \geq t) = \P(|X_b - X_0| \geq t) \leq 2\exp\left(- \frac{t^2}{2(4ab/n + t/3)} \right) \leq 2 \exp\left(- \frac{1}{16} \min\left\{ \frac{t^2}{ab/n}, t\right\}\right)\,. \qedhere
    \end{equation*}
\end{proof}

We require a central limit theorem for hypergeometric random variables.  We isolate the following statement, which follows from e.g.\ \cite[Thm.~2.2]{lahiri2007berry}

\begin{theorem}\label{th:CLT}
    For each $M \geq 0$ there are constants $c,C > 0$ so that the following holds.  Let $W \sim \hypergeom(n,a,b)$ and set $\sigma^2 = \Var(W)$.  If $\Var(W) \geq C$ then $$\P(W - \E W \geq M \sigma) \geq c\,.$$ 
\end{theorem}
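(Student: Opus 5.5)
We derive \cref{th:CLT} from a Berry--Esseen bound for the hypergeometric distribution, such as \cite[Thm.~2.2]{lahiri2007berry}. That bound has the form
\begin{equation*}
    \sup_{x \in \mathbb{R}} \left| \P\left( \frac{W - \E W}{\sigma} \leq x \right) - \Phi(x) \right| \leq \frac{C_0}{\sigma},
\end{equation*}
where $\Phi$ is the standard normal distribution function and $C_0$ is an absolute constant. We should check that the version we cite gives an error depending only on $\sigma$, uniformly in $n$, $a$ and $b$. The natural rate is of order $1/\sigma$, since the hypergeometric law is a sum of weakly dependent indicators with total variance $\sigma^2$. Any error term that tends to $0$ as $\sigma \to \infty$, uniformly in the other parameters, would also be enough.

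Given such a bound, fix $M \geq 0$ and apply it at $x = M$. This gives
\begin{equation*}
    \P(W - \E W \geq M\sigma) \geq \P(W - \E W > M\sigma) \geq 1 - \Phi(M) - \frac{C_0}{\sigma}.
\end{equation*}
Set $c = (1 - \Phi(M))/2 > 0$. Choose $C$ so that $C_0/\sqrt{C} \leq c$. Then whenever $\sigma^2 = \Var(W) \geq C$, the right-hand side is at least $c$, which proves the claim.

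The main obstacle is ensuring the Berry--Esseen estimate is uniform over all parameter regimes, including $a$ or $b$ very close to $n$, or $a/n$ very small. If the cited theorem is stated with an error in terms of $\min(a, n-a)$ and $\min(b, n-b)$, we convert that expression into a function of $\sigma^2$ using \eqref{eq:hypergeom-mean-variance}. By that formula, $\sigma^2 \asymp \frac{\min(a,n-a)\min(b,n-b)}{n}$ up to constants, so $\sigma^2 \geq C$ forces the relevant quantities to be large.

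As a fallback, we can represent $W$ as a sum of independent Bernoulli variables with varying parameters. This uses the classical fact that the generating polynomial of the hypergeometric distribution has only real roots. We then apply the Lindeberg CLT or the Berry--Esseen theorem for independent bounded summands, where the third moment is at most the variance.
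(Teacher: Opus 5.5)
Your proposal is correct and follows the paper's route: the paper states this result as a direct consequence of the uniform Berry--Esseen bound in \cite[Thm.~2.2]{lahiri2007berry}, and your choice $c = (1-\Phi(M))/2$ with $C_0/\sqrt{C} \le c$ simply writes out the routine deduction that the paper leaves implicit. Your real-rootedness fallback (writing $W$ as a sum of independent Bernoulli variables and applying Berry--Esseen with third moment at most $\sigma^2$) is also valid, but it is not needed once the cited bound is uniform in $\sigma$.
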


\section{Upper Bound}\label{sec:UB}

We prove the upper bound of \cref{th:main}.  
\begin{proposition}\label{pr:upper-bound}
    There is a universal constant $c > 0$ so that for $n \geq 2$ the following holds.  Let $\pi, \tau \in S_n$ be independently and uniformly chosen at random.  Then $$\P_n(\pi \leq \tau) \leq \exp(-c (\log n)^2)\,.$$
\end{proposition}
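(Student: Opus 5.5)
We follow the outline of the introduction. Set $N = \lfloor n^{7/12}\rfloor$. Since $\{\pi\le\tau\}\subset\{Z(a,b)\ge 0 \text{ for all } a,b\le N\}$, it suffices to bound the probability of the latter event by $\exp(-c\log^2 n)$.

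\textbf{Step 1: Bernoulli replacement.} We show that the restrictions of $M_\pi$ and $M_\tau$ to $[N]\times[N]$ can be coupled with independent $N\times N$ matrices of i.i.d.\ $\mathrm{Bernoulli}(1/n)$ entries, so that they agree except on an event of probability $o(1)$, or better $n^{-\Omega(1)}$. Both laws are explicit. For the permutation, the probability of a given placement of $k$ ones in distinct rows and columns is $(n-k)!/n!$. For the Bernoulli matrix it is $n^{-k}(1-1/n)^{N^2-k}$, and the Bernoulli matrix has a repeated row or column with probability $O(N^3/n^2)=O(n^{-1/4})$. The ratio of the two weights is $1+O(k^2/n + N^2/n^2)$, and $k$ concentrates around $N^2/n = n^{1/6}$. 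This gives a total variation bound of $O(n^{-c})$.

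A polynomially small error is too large to compare with $\exp(-c\log^2 n)$ directly. So I would instead use the following chain of inequalities, where $k_i$ denotes the number of ones in the $N\times N$ block of $M_\pi$ and $M_\tau$:
\[
\P(E)\le \P(E\cap\{\text{good } k_1,k_2\}) + \P(\text{bad } k).
\]
On the good event the two laws have Radon--Nikodym derivative at most $2$. The bad event, where $k\gtrsim n^{1/6+\epsilon}$, is superpolynomially rare by \cref{th:Bernstein}, with probability $\exp(-n^{\Omega(1)})$. Hence $\P(E)\le 4\P_{\mathrm{Bern}}(E)+\exp(-n^{c})$.

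\textbf{Step 2: Gaussian approximation on a geometric grid.} Fix an integer $\rho\ge 2$ large and let $\mathcal S=\{\rho^i\}\cap[n^{1/2+\eta}, N]$, with $\eta = 1/24$; then $|\mathcal S|\asymp \log n$. In the Bernoulli model, $Z(a,b)$ is a sum over $[a]\times[b]$ of i.i.d.\ centered variables $\xi_{ij}$ (differences of two independent Bernoullis) with variance $\asymp 1/n$. Group the entries into blocks of size about $\sqrt n\times\sqrt n$, so each block sum has variance of order $1$. Then apply Rio's strong approximation (\cref{th:KMT}) to the two-parameter partial sum process indexed by rectangles. This yields a coupling with a Brownian sheet $W$ such that
\[
\max_{a,b\le N}\bigl|Z(a,b) - \sqrt{2/n}\,W(a,b)\bigr| \le K\log^{C} n
\]
with probability $1-n^{-10}$; the rounding error from blocking is controlled by \cref{th:Bernstein}. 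For $(a,b)\in\mathcal S^2$ the standard deviation is $\sqrt{ab/n}\ge n^{\eta}$, which swamps the coupling error. Thus $E$ implies $G(a,b)\ge -n^{-\eta/2}$ for the normalized Gaussians $G(a,b) = W(a,b)/\sqrt{ab}$, $(a,b)\in\mathcal S^2$.

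\textbf{Step 3: Gaussian persistence.} Compute
\[
\mathrm{Corr}(G(a_1,b_1),G(a_2,b_2)) = \sqrt{\tfrac{\min(a_1,a_2)}{\max(a_1,a_2)}\cdot\tfrac{\min(b_1,b_2)}{\max(b_1,b_2)}}\,.
\]
On the grid this equals $\rho^{-(|i_1-i_2|+|j_1-j_2|)/2}$, which decays geometrically in the $\ell^1$ distance. The correlation matrix is therefore a Kronecker product of two Toeplitz matrices with entries $\rho^{-|i-j|/2}$, which are uniformly well conditioned. Applying Li--Shao (\cref{th:general-gaussian-upper-bound}) gives
\[
\P\bigl(G\ge -\epsilon \text{ on } \mathcal S^2\bigr)\le e^{-c|\mathcal S|^2} = e^{-c\log^2 n}.
\]
The shift by $-n^{-\eta/2}$ is harmless: it can be absorbed by monotonicity, or by a Cameron--Martin shift costing $e^{O(|\mathcal S|^2 n^{-\eta})}$.

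\textbf{Main obstacle.} I expect Step 2 to be the hardest. Rio's theorem is stated for i.i.d.\ variables with some smoothness or moment condition, whereas our summands are sparse and lattice valued. The grouping must be arranged so that the block sums satisfy its hypotheses and the error stays polylogarithmic, and this must hold uniformly over all rectangles in $[N]^2$, not only over the grid.
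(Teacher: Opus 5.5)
Your route is the paper's: Bernoulli replacement on the $\lfloor n^{7/12}\rfloor$ corner with a likelihood ratio bounded by a constant off an $e^{-n^{\Omega(1)}}$ event, $\sqrt n\times\sqrt n$ blocking, Rio's strong approximation, and Li--Shao on a geometric grid. \textbf{The gap is in Step 2: as written it gives only a polynomial bound.} You let the coupling fail with probability $n^{-10}$. Combining Steps 1--3 then yields only $\P_n(\pi\le\tau)\le 4\bigl(n^{-10}+e^{-c\log^2 n}\bigr)+e^{-n^{c}}$. This is the very issue you flagged in Step 1: every discarded event must cost at most $e^{-c\log^2 n}$. The fix is the exponential-tail form of the strong approximation, as in \cref{th:KMT}. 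On the $M\times M$ array of normalized block sums, with $M=\Theta(n^{1/12})$, it gives error at most $C\log^2 M$ off an event of probability $2e^{-\log^2 M}=e^{-\Omega(\log^2 n)}$. A polylogarithmic error is still negligible against the standard deviation $\ge n^{\eta}$ at your grid points.

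Relatedly, your displayed bound over \emph{all} $a,b\le N$ does not follow from blocking plus Bernstein. Between block corners, $Z$ moves by sums over regions of area up to $RN$, where $R=\lfloor\sqrt n\rfloor$. Such sums have typical size $n^{1/24}$, so Bernstein controls the rounding only at that scale, not polylogarithmically. You do not need this. The event $\{Z\ge 0 \text{ on } [N]^2\}$ is contained in the event that $Z\ge0$ at block corners, so, as in the paper, apply the coupling only at multiples of $R$ and place your grid there. This also settles your ``main obstacle''. The normalized block sums are i.i.d., centered, and of unit variance. Since $\E e^{t\xi_{ij}}\le\exp\bigl((2+o(1))(\cosh t-1)\bigr)$, their exponential moments are bounded by $2$ on a fixed neighborhood of $0$, which is all \cref{th:KMT} asks for. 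Lattice values play no role.

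Separately, the weights in Step 1 are miscomputed, though the conclusion survives. $(n-k)!/n!$ is the probability that $\pi$ contains the $k$ prescribed entries, not that the corner equals $M$. The latter forces the remaining $N-k$ rows of the corner to map outside $[N]$, and equals $(n-N)_{N-k}/(n)_N=n^{-k}e^{-N^2/n}\bigl(1+O(kN/n+N^3/n^2)\bigr)$. Likewise, $(1-1/n)^{N^2-k}=e^{-N^2/n}\bigl(1+O(k/n+N^2/n^2)\bigr)$, not $1+O(N^2/n^2)$. The factors $e^{-N^2/n}=e^{-\Theta(n^{1/6})}$ cancel, giving a ratio $1+o(1)$ uniformly in $k\le n^{1/5}$, which is \cref{lem:Bernoulli-comparison}. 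Taken literally, your two formulas would give a ratio of order $e^{n^{1/6}}$.
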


We first compare the permutation matrices underlying $Z$ with independent matrices with i.i.d.\ $\mathrm{Bernoulli}(1/n)$ entries.  We show that on an appropriately chosen submatrix, the likelihood ratio is essentially $1$.
\begin{lemma} \label{lem:Bernoulli-comparison}
    Set $N = \lfloor n^{7/12} \rfloor$ and let $M$ be an $N \times N$ matrix with entries in $\{0,1\}$.  Let $P$ be a random $n \times n$ permutation matrix and $Q$ a random $N \times N$ matrix with i.i.d.\ $\mathrm{Bernoulli}(1/n)$ entries.  Then uniformly among all $M$ with $\sum_{i,j} M(i,j) \leq n^{1/5}$ so that there is at most a single $1$ in each row and column we have
    \begin{equation*}
        \P(P_{[N] \times [N]} = M) = (1 + o(1))\P(Q = M)\,.
    \end{equation*} 
\end{lemma}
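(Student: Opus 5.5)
We will compute both probabilities exactly and compare them. Fix $M$ with $k \le n^{1/5}$ ones, at most one in each row and column. On the Bernoulli side independence gives at once
\[
\P(Q = M) = n^{-k}\left(1 - \tfrac1n\right)^{N^2 - k}.
\]
Since $N^2 \le n^{7/6}$, this equals $n^{-k}\exp\bigl(-N^2/n + O(N^2/n^2 + k/n)\bigr) = (1+o(1))\, n^{-k} e^{-N^2/n}$.

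For the permutation side we count permutations $\sigma$ whose matrix, restricted to $[N]\times[N]$, equals $M$. The $k$ prescribed ones fix $\sigma$ on $k$ rows. The remaining $N-k$ rows in $[N]$ must send their $1$ outside the $N-k$ unused columns of $[N]$, and the other $n-N$ rows are unrestricted. By inclusion--exclusion on the set of rows landing in forbidden columns, or equivalently by sequentially assigning the $N-k$ restricted rows first, the count is $\#\{\sigma\} = (n-k)!\,\P(\text{a uniform } \sigma' \in S_{n-k} \text{ avoids an } (N-k)\times(N-k) \text{ box})$. Since $(n-k)!/n! = n^{-k}(1+O(k^2/n)) = (1+o(1))n^{-k}$ when $k \le n^{1/5}$, it remains to show that the avoidance probability is $(1+o(1))e^{-N^2/n}$.

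Write $m = n-k$ and $L = N-k$. The avoidance probability is the chance that a $\hypergeom(m, L, L)$ variable equals $0$, that is,
\[
\binom{m-L}{L}\Big/\binom{m}{L} = \prod_{i=0}^{L-1}\frac{m-L-i}{m-i} = \prod_{i=0}^{L-1}\left(1 - \frac{L}{m-i}\right).
\]
Take logarithms and use $\log(1-x) = -x + O(x^2)$ with $x = L/(m-i) \le 2N/n$. The quadratic error totals $O(L\cdot N^2/n^2) = O(N^3/n^2) = O(n^{-1/4}) = o(1)$. This is the step where the exponent $7/12 < 2/3$ is used, and it is the only delicate point. The main term is
\[
\sum_{i=0}^{L-1} \frac{L}{m-i} = \frac{L^2}{m}\bigl(1 + O(N/n)\bigr) = \frac{L^2}{m} + O(N^3/n^2).
\]
Finally,
\[
\frac{L^2}{m} - \frac{N^2}{n} = O\!\left(\frac{kN}{n} + \frac{N^2 k}{n^2}\right) = O\bigl(n^{1/5 + 7/12 - 1}\bigr) = o(1).
\]
Hence the avoidance probability is $e^{-N^2/n + o(1)}$, with all error terms uniform in $M$ because they depend only on $k \le n^{1/5}$.

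Combining the two sides gives $\P(P_{[N]\times[N]} = M) = (1+o(1))\, n^{-k} e^{-N^2/n} = (1+o(1))\P(Q=M)$. The main obstacle is bookkeeping the second-order terms: the quadratic term $N^3/n^2$, and the $k$-dependent corrections $k^2/n$ and $kN/n$. All of these vanish exactly because $N = n^{7/12}$ and $k \le n^{1/5}$.
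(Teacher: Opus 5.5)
Your proof is correct, and it takes a more direct route than the paper.

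The paper factors through the number of ones. It writes $\P(P_{[N]\times[N]}=M)=\P(|P_{[N]\times[N]}|=k)\,\P(P_{[N]\times[N]}=M\mid |P_{[N]\times[N]}|=k)$. It then identifies this conditional law with that of $Q$ given $|Q|=k$ and no row or column containing two ones; both are uniform over partial permutation matrices with $k$ ones. From there it needs two auxiliary estimates:
\begin{itemize}
\item a comparison of the point masses of $\hypergeom(n,N,N)$ and $\mathrm{Binomial}(N^2,1/n)$ at $k$ (\cref{lem:count-bernoulli-perm});
\item an $O(k^2/N)$ bound on the conditional probability that $Q$ has a row or column collision (\cref{lem:cQ-likely}).
\end{itemize}

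You instead compute the point probability exactly,
$\P(P_{[N]\times[N]}=M)=\frac{1}{(n)_k}\cdot\frac{(n-N)_{N-k}}{(n-k)_{N-k}}=\frac{(n-N)_{N-k}}{(n)_N}$,
and compare it directly with $n^{-k}(1-1/n)^{N^2-k}$. So the exchangeability/conditioning step and the collision lemma disappear entirely. The only error terms left are of order $k^2/n$, $kN/n$, $N^3/n^2$, or smaller.

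Both arguments rest on the same core estimate, $\prod_{j}\bigl(1-N/(n-j)\bigr)=e^{-N^2/n+o(1)}$, which is exactly where $N\ll n^{2/3}$ enters. The paper's modular version separates the two sources of discrepancy: the law of the count, and collisions in $Q$ given the count. That mirrors the heuristic in its proof outline. Your version is shorter and self-contained, and it makes plain that the probability depends on $M$ only through $k$.
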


We prove \cref{lem:Bernoulli-comparison} in \cref{sec:Bernoulli-comparison}.  Our next step is to use a theorem of Rio in order to couple sums over rectangles in i.i.d.\ matrices with subexponential entries with a corresponding sum with Gaussian entries.  The following theorem is a consequence of \cite[Theorem 2.4]{rio1995strong}:\footnote{Rio's work is stated in terms of a bound on the number of moments and the distribution function.  Under our assumption of an exponential moment, one may uniformly bound the distribution function with Markov's inequality.}
\begin{theorem}\label{th:KMT}
    Let $\xi$ be a random variable with $\E \xi = 0, \E \xi^2 = 1$ and $\E e^{t \xi}\leq 2$ for all $|t| \leq c_0$.  Let $\xi_{i,j}$ be an array of i.i.d.\ copies of $\xi$.  Then there is a constant $C > 0$ depending only on $c_0$ so that we may couple $\xi_{i,j}$ with an array of i.i.d.\ standard Gaussian random variables $g_{i,j}$ so that \begin{equation*}
        \P\left(\max_{a,b \leq n} \left|\sum_{i \leq a, j \leq b} (\xi_{i,j} - g_{i,j}) \right| \geq C \log^2 n\right) \leq 2 e^{-\log^2 n}\,. 
    \end{equation*}
\end{theorem}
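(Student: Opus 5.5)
The plan is to read \cref{th:KMT} off from \cite[Theorem 2.4]{rio1995strong} by checking its hypotheses with constants that depend only on $c_0$. The deviation probability then comes out of the exponential-type tail in Rio's bound. I am not certain of the exact form in which Rio states his error term, so the argument below assumes a particular shape of that statement.

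First I would reduce the tail assumption to the form Rio uses. Rio controls the law of $\xi$ through moment and distribution-function conditions. By Markov's inequality applied to $e^{\pm c_0\xi}$, the hypothesis $\E e^{t\xi}\le 2$ for $|t|\le c_0$ gives
\begin{equation*}
\P(|\xi|\ge s)\le \P(e^{c_0\xi}\ge e^{c_0 s})+\P(e^{-c_0\xi}\ge e^{c_0 s})\le 4e^{-c_0 s}\quad\text{for all } s\ge 0.
\end{equation*}
This single tail function dominates the distribution function of $\xi$ uniformly over all admissible laws. It also bounds every moment $\E|\xi|^r\le C_r(c_0)$. So any constant in Rio's theorem that depends on the law of $\xi$ only through these quantities can be replaced by one depending only on $c_0$. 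Together with $\E\xi=0$ and $\E\xi^2=1$, this puts us in the setting of \cite[Theorem 2.4]{rio1995strong} for the partial-sum process indexed by lower-left quadrants of the lattice.

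Next I would apply Rio's coupling on the grid $[n]\times[n]$. The sums $\sum_{i\le a,\,j\le b}$ are exactly the values of the partial-sum process on the quadrants $[a]\times[b]$, so Rio's theorem gives a coupling with an i.i.d.\ standard Gaussian array $g_{i,j}$. I expect the deviation bound to take the form
\begin{equation*}
\P\Bigl(\max_{a,b\le n}\Bigl|\sum_{i\le a,\, j\le b}(\xi_{i,j}-g_{i,j})\Bigr|\ge C_1\log^2 n + x\log n\Bigr)\le K e^{-\lambda x}
\end{equation*}
for all $x\ge 0$, with $C_1,K,\lambda$ depending only on the tail constants above. Here $\log^2 n$ is the two-dimensional analogue of the $\log n$ in KMT, and the array has $n^2$ points with $\log(n^2)=2\log n$. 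I would take $x=\lambda^{-1}\log^2 n+\lambda^{-1}\log K$. The right-hand side is then at most $e^{-\log^2 n}$, and the threshold is at most $C\log^2 n$ for a suitable $C=C(c_0)$ once $n\ge 2$. Small $n$ is absorbed by enlarging $C$, since the maximum is trivially controlled there.

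If Rio's statement is instead an almost-sure or moment bound of the shape $\E\exp\bigl(\lambda\max|\cdot|/\log n\bigr)\le n^{C_2}$, the same conclusion follows by Markov's inequality at level $C\log^2 n$: choosing $C$ large compared to $C_2/\lambda$ gives probability at most $2e^{-\log^2 n}$.

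The main obstacle is bookkeeping rather than new mathematics. One has to confirm that Rio's constants depend on the law of $\xi$ only through the dominating tail, which is what the footnote asserts. One also has to match his normalization of the index set, the unit square rescaled versus the integer grid, to ours. The Markov step above handles the tail domination. Rescaling $[0,1]^2$ by $n$ turns his class of quadrants into exactly our rectangles $[a]\times[b]$.
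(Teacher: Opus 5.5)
Your route is the paper's. The paper invokes Rio's Theorem~2.4 and, as its footnote says, uses Markov's inequality on the exponential moment to dominate the distribution function uniformly; that is exactly your bound $\P(|\xi|\ge s)\le 4e^{-c_0 s}$.

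The gap is in the step where you convert a guessed form of Rio's error bound into the stated inequality. The arithmetic fails in both versions you give. Write $D_{a,b}=\sum_{i\le a,\,j\le b}(\xi_{i,j}-g_{i,j})$.
\begin{itemize}
\item In the first version you assume $\P(\max_{a,b}|D_{a,b}|\ge C_1\log^2 n+x\log n)\le Ke^{-\lambda x}$ and take $x=\lambda^{-1}(\log^2 n+\log K)$. The threshold is then $C_1\log^2 n+\lambda^{-1}\log^3 n+\lambda^{-1}\log K\log n$. This is of order $\log^3 n$, not $C\log^2 n$.
\item In the exponential-moment version, Markov at level $C\log^2 n$ gives $n^{C_2}e^{-\lambda C\log n}=n^{C_2-\lambda C}$. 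This is a polynomial bound, and no fixed $C$ makes it $\le 2e^{-\log^2 n}=2n^{-\log n}$.
\end{itemize}

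The underlying point is that an exponential tail at scale $\log n$ can only produce probabilities $e^{-O(\log n)}$ at deviations of order $\log^2 n$. To get the theorem as stated, Rio's inequality must have unit-scale exponential tails above the $\log^2 n$ level. One form is $\P(\max_{a,b}|D_{a,b}|\ge C_1\log^2 n+x)\le Ke^{-\lambda x}$ for all $x\ge 0$. This is the direct analogue of the one-dimensional KMT bound $\P(\max_k|S_k-W_k|\ge C\log n+x)\le Ke^{-\lambda x}$. Another is $\E\exp(\lambda\max_{a,b}|D_{a,b}|)\le Ke^{C_2\log^2 n}$. With the first, your choice $x=\lambda^{-1}(\log^2 n+\log K)$ works. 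With the second, Markov at level $C\log^2 n$ works for $C$ large.

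So what you must check in Rio's paper is the scale of the deviation parameter in the error term, not only the hypotheses on $\xi$. If that parameter carries an extra factor $\log n$, the correct conclusion is threshold $C\log^3 n$ at probability $2e^{-\log^2 n}$. That weaker statement would still serve the paper's application: one would restate \cref{lem:gaussian-persistence} with threshold $C\log^3 n$ and start $\mathcal{S}_0$ at $\rho^i\ge\log^4 n$. But it is not the inequality you set out to prove.
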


Finally, we will require an upper bound on the Gaussian persistence problem:
\begin{lemma}\label{lem:gaussian-persistence}
    For each $C > 0$ there is a constant $c > 0$ so that \begin{equation*}
        \P\left(\min_{a,b \leq n} \sum_{i \leq a, j \leq b} g_{i,j} \geq -C \log^2 n \right) \leq 2\exp\left(-c \log^2 n\right)\,.
    \end{equation*}
\end{lemma}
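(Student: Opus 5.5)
We plan to derive the lemma from the Li--Shao Gaussian comparison theorem (the result the paper restates as \cref{th:general-gaussian-upper-bound}), applied to a sparse geometric grid of points on which the Gaussian sheet $S(a,b) = \sum_{i\le a, j \le b} g_{i,j}$ has correlations bounded away from $1$. The $-C\log^2 n$ slack is harmless provided every grid point has standard deviation much larger than $\log^2 n$.

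\textbf{Step 1 (choose the grid).} Fix an integer $\rho \ge 2$, to be chosen large. Let $\mathcal{S} = \{\rho^k : k \ge 0\} \cap [n^{1/2}, n]$, so $|\mathcal{S}| \ge c_1 \log n$ for some $c_1 = c_1(\rho) > 0$. For $(a,b) \in \mathcal{S}^2$ we have $\Var S(a,b) = ab \ge n$. Hence $C\log^2 n \le \eps\sqrt{ab}$ for any fixed $\eps > 0$ once $n$ is large, and
\[
\P\Bigl(\min_{a,b\le n} S(a,b) \ge -C\log^2 n\Bigr) \le \P\bigl(S(a,b)/\sqrt{ab} \ge -\eps \ \ \forall (a,b)\in\mathcal{S}^2\bigr).
\]

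\textbf{Step 2 (correlations).} For distinct $(a_1,b_1),(a_2,b_2)$ we have
\[
\operatorname{Corr}\bigl(S(a_1,b_1), S(a_2,b_2)\bigr) = \sqrt{\frac{\min(a_1,a_2)}{\max(a_1,a_2)}}\,\sqrt{\frac{\min(b_1,b_2)}{\max(b_1,b_2)}} \le \rho^{-1/2},
\]
and this is at least $0$. More quantitatively, the correlation is $\rho^{-(|k_1-k_2| + |l_1-l_2|)/2}$, so it decays geometrically in the $\ell^1$ distance on the exponent lattice.

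\textbf{Step 3 (Gaussian bound).} Apply the Li--Shao theorem to the standardized vector $(S(a,b)/\sqrt{ab})_{(a,b)\in\mathcal{S}^2}$ with level $-\eps$. It bounds the persistence probability by a product of marginal terms $\P(g \ge -\eps)$, roughly $(1/2 + \eps)$ each, times a correction of the form $\exp\bigl(C'\sum_{i \neq j} r_{ij}\bigr)$ or with a similar dependence on the correlations.

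\textbf{Main obstacle.} The correction term must not swamp the product. Summing the geometric decay over pairs gives
\[
\sum_{i\ne j} r_{ij} \le |\mathcal{S}|^2 \sum_{(d_1,d_2)\ne 0} \rho^{-(|d_1|+|d_2|)/2} = O\bigl(\rho^{-1/2}|\mathcal{S}|^2\bigr).
\]
Choosing $\rho$ large makes this at most $\tfrac{1}{10}|\mathcal{S}|^2$. The marginal product is then about $2^{-|\mathcal{S}|^2}$, and the total bound is at most $\exp(-c|\mathcal{S}|^2) \le \exp(-c\log^2 n)$.

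The delicate point is matching the exact form of the Li--Shao hypothesis, which is a comparison with an independent vector via Slepian-type interpolation and needs summable correlations together with a bounded threshold. The positive threshold $\eps$ must also be absorbed. If the theorem only allows threshold $0$, we will instead use a fixed $\eps$ and bound each marginal by $\P(g \ge -\eps) \le 0.6$, which still gives exponential decay in $|\mathcal{S}|^2$. Failing that, we will use a direct argument: pass to a sub-grid with spacing $\rho^{L}$, condition sequentially, and use Gaussian regression. Each new coordinate has conditional variance at least $1 - O(\rho^{-L/2})$ given the earlier ones. Its conditional mean, however, can grow, which is exactly why a Li--Shao-type tool is preferred.
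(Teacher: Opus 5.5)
Your proposal is correct and is essentially the paper's proof. The paper applies Li--Shao in the form ``if $\Var(X_j)\ge x^2$ for all $j$ and every row sum of $|\operatorname{corr}(X_i,X_j)|$ (diagonal included) is at most $5/4$, then $\P(\min_j X_j\ge -x)\le e^{-M/10}$''. It uses the grid $\{\rho^i\}\cap[\log^3 n,n]$ with $\rho=400$ and the same correlation formula $\rho^{-(|\Delta i|+|\Delta j|)/2}$. Your inner sum $\sum_{(d_1,d_2)\neq 0}\rho^{-(|d_1|+|d_2|)/2}$ is exactly the off-diagonal row sum that this hypothesis needs. Your cutoff $n^{1/2}$ in place of $\log^3 n$ still gives $\Var\ge (C\log^2 n)^2$ and $\Theta(\log^2 n)$ grid points, so the threshold $\eps$ and your fallback arguments are unnecessary.
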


We show that \cref{lem:gaussian-persistence} follows from a general theorem by Li and Shao \cite{li2004lower} for Gaussian processes in \cref{sec:brownian-sheet}.  We now deduce the upper bound \cref{pr:upper-bound}.

\begin{proof}[Proof of \cref{pr:upper-bound}]
    We first perform our Bernoulli replacement.  Set $N = \lfloor n^{7/12} \rfloor$ and define \begin{equation*}
        \mathcal{T} = \{ X(N,N) \leq n^{1/5} , Y(N,N) \leq n^{1/5} \}\,.
    \end{equation*}
    \cref{th:Bernstein} shows \begin{equation*}
        \P(\mathcal{T}^c) \leq \exp(- n^{\Omega(1)}) \leq \exp(-\log^2 n)\,.
    \end{equation*}
    We then bound: \begin{equation}\label{eq:truncate-pre-bernoulli}
         \P_n(\pi \leq \tau) =\P_n\left(\min_{a \leq n, b\leq n} Z(a,b) \geq 0\right) \leq \P_n\left(\min_{a\leq N, b \leq N} Z(a,b) \geq 0  \cap \mathcal{T}\right) + e^{-\log^2 n} \,.
    \end{equation}
    Consider the random variables $\zeta_{i,j}$ with common distribution $\zeta$ given by \begin{equation*}
        \P(\zeta = 1) = \P(\zeta = -1) = \frac{1}{n}\left(1 - \frac{1}{n}\right) \qquad \text{ and }\qquad\P(\zeta = 0) = 1 - \frac{2}{n}\left(1 - \frac{1}{n}\right)
    \end{equation*}
    and note that $\zeta$ is the difference of two independent copies of $\mathrm{Bernoulli}(1/n)$ random variables.  By \cref{lem:Bernoulli-comparison}, for $n$ sufficiently large we have \begin{align}\label{eq:Z-by-zeta}
        \P_n\left(\min_{a\leq N, b \leq N} Z(a,b) \geq 0  \cap \mathcal{T}\right) \leq 2\P\left( \min_{a \leq N, b \leq N} \sum_{i \in [a], j \in [b]} \zeta_{i,j} \geq 0 \right)\,.
    \end{align}
    We now group the random variables into blocks with variance $1 + o(1)$.  For this, define $R = \lfloor \sqrt{n} \rfloor$ and $M = \lfloor N / R\rfloor = \Theta(n^{1/12})$.  For $i,j \in \{0,1,\ldots,M-1\}$, define the random variables $\xi_{i,j}$ and $\overline{\xi}_{i,j}$ via \begin{align*}
        \xi_{i,j} = \sum_{k \in [iR,(i+1)R)), \ell \in [jR,(j+1)R)} \zeta_{k,\ell},\qquad \overline{\xi}_{i,j} = \frac{\xi_{{i,j}}}{\sqrt{\Var(\xi_{i,j})}}
    \end{align*}
    and bound  \begin{equation}\label{eq:renormalization}
        \P\left( \min_{a \leq N, b \leq N} \sum_{i \in [a], j \in [b]} \zeta_{i,j} \geq 0  \right) \leq \P\left( \min_{a \leq M, b \leq M} \sum_{i \in [a], j \in [b]} \overline{\xi}_{i,j} \geq 0 \right)\,.
    \end{equation}
    Seeking to apply \cref{th:KMT}, note that $\E \overline{\xi}_{i,j} = 0$ and $\E \overline{\xi}_{i,j}^2 = 1$.  Set $\alpha = 1/n(1 - 1/n)$ and bound \begin{equation*}
        \E \exp(t \xi_{i,j}) = \left(2\alpha \cosh(t) + 1 - 2\alpha \right)^{R^2} \leq \exp\left(2\alpha R^2 (\cosh(t) - 1) \right) = \exp((2 + o(1)) (\cosh(t) - 1))
    \end{equation*}
    which can be made $\leq 2$ by requiring, say, $|t| \leq 1/2$.  \cref{th:KMT}  and \cref{lem:gaussian-persistence} show \begin{equation}\label{eq:replace-gaussian}
        \P\left( \min_{a \leq M, b \leq M} \sum_{i \in [a], j \in [b]} \overline{\xi}_{i,j} \geq 0 \right) \leq e^{-\Omega(\log^2 n)} +\P\left( \min_{a \leq M, b \leq M} \sum_{i \in [a], j \in [b]} g_{i,j} \geq -C\log^2 n \right) \leq e^{-\Omega(\log^2 n)} \,.
    \end{equation} 
    Combining equations \eqref{eq:truncate-pre-bernoulli}, \eqref{eq:Z-by-zeta}, \eqref{eq:renormalization} and \eqref{eq:replace-gaussian} completes the proof.
\end{proof}

\subsection{Proof of \cref{lem:Bernoulli-comparison}}\label{sec:Bernoulli-comparison}

The main step is to show that the random matrices $P_{[N] \times [N]}$ and $Q$ have asymptotically the same distribution for the number of $1$'s.  We recall that by \cref{lem:count=hypergeom} the number of $1$'s in $P_{[N] \times [N]}$ is a hypergeometric random variable.

\begin{lemma}\label{lem:count-bernoulli-perm}
    Set $N = \lfloor n^{7/12} \rfloor$ and let $X \sim \hypergeom(n,N,N)$ and $Y \sim \mathrm{Binomial}(N^2,1/n)$.  Then uniformly for $k \leq n^{1/5}$ we have \begin{align*}
        \frac{\P\left(X = k\right)}{\P\left(Y = k\right)} = 1 + o(1)
    \end{align*}
    for $n$ sufficiently large.
\end{lemma}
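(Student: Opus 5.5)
The plan is to write both probabilities explicitly and compare them factor by factor. By the combinatorial description of the hypergeometric law,
\begin{equation*}
\P(X = k) = \frac{\binom{N}{k}\binom{n-N}{N-k}}{\binom{n}{N}},\qquad \P(Y = k) = \binom{N^2}{k} n^{-k}\left(1-\frac1n\right)^{N^2-k}.
\end{equation*}
Rewrite $\P(X=k)$ as $\binom{N}{k}\cdot \frac{(N)_k}{(n)_k}\cdot \frac{\binom{n-k}{N-k}\cdot\text{(correction)}}{\ldots}$; concretely, I would use the equivalent form $\P(X=k) = \binom{N}{k}^2 k!\, \frac{(n-N)_{N-k}}{(n)_N}$, which counts the ways to place $k$ ones in the $N\times N$ corner (choose $k$ rows, $k$ columns, a matching) times the probability that the remaining $N-k$ rows send their ones outside the first $N$ columns. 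The ratio then splits into three factors that I will estimate separately, always using $k \le n^{1/5}$, $N \approx n^{7/12}$, so $k^2/N \to 0$, $kN/n \to 0$.

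First factor (the combinatorial count): $\binom{N}{k}^2 k! / \binom{N^2}{k} = \frac{(N)_k^2}{(N^2)_k} = \prod_{i<k}\frac{(1-i/N)^2}{1-i/N^2} = \exp(O(k^2/N)) = 1+o(1)$, since $k^2/N \le n^{2/5-7/12}\to 0$. Second factor (the first $k$ rows): compare $\frac{(n-N)_{N-k}}{(n)_N}$ with $n^{-k}(1-1/n)^{N^2-k}$. Write $(n)_N = (n)_k\,(n-k)_{N-k}$ and $(n)_k = n^k \exp(O(k^2/n))$, contributing $1+o(1)$. What remains is the key quantity
\begin{equation*}
\frac{(n-N)_{N-k}}{(n-k)_{N-k}} = \prod_{j=0}^{N-k-1}\left(1 - \frac{N-k}{n-k-j}\right),
\end{equation*}
to be compared with $(1-1/n)^{N^2-k}$.

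This comparison is the main obstacle, since both quantities are of size $\exp(-\Theta(N^2/n)) = \exp(-\Theta(n^{1/6}))$, so the exponents must match to $o(1)$ precision. Take logarithms and use $\log(1-x) = -x - x^2/2 + O(x^3)$. For the product, $x_j = \frac{N-k}{n-k-j} = \frac{N-k}{n}\bigl(1 + \frac{k+j}{n} + O(N^2/n^2)\bigr)$, so
\begin{equation*}
\sum_j x_j = \frac{(N-k)^2}{n} + O\!\left(\frac{N^3}{n^2}\right),\qquad \sum_j x_j^2 = O\!\left(\frac{N^3}{n^2}\right),
\end{equation*}
while $(N^2-k)\log(1-1/n) = -\frac{N^2-k}{n} + O(N^2/n^2)$. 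The difference of exponents is $\frac{N^2 - (N-k)^2 - k}{n} + O(N^3/n^2) = O(kN/n + N^3/n^2)$. Both error terms vanish: $N^3/n^2 = n^{-1/4}$ and $kN/n \le n^{1/5+7/12-1}\to0$. This is exactly where $N\ll n^{2/3}$ is used. All estimates are uniform in $k\le n^{1/5}$, so multiplying the three factors gives the claimed ratio $1+o(1)$. I would be careful to keep the $-k^2/n$ piece inside $(N-k)^2$ rather than dropping it prematurely, though it is $o(1)$ anyway.
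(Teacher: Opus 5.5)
Your argument is correct and is essentially the paper's proof: both use $\P(X=k)=\frac{(N)_k^2}{k!}\cdot\frac{(n-N)_{N-k}}{(n)_N}$, estimate it factor by factor, and control all errors by $k^2/N$, $kN/n$ and $N^3/n^2$ (the last is where $N\ll n^{2/3}$ enters). The difference is cosmetic: the paper compares each probability separately to $\frac{(N^2/n)^k}{k!}e^{-N^2/n}$ (via $(n-N)_N/(n)_N\approx(1-N/n)^N\approx e^{-N^2/n}$), while you compare the ratio directly by matching exponents; you should also delete the garbled rewriting you abandoned at the start of your first paragraph.
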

\begin{proof}
    First compute \begin{align*}
        \P(Y = k) &= \binom{N^2}{k}n^{-k}(1 - n^{-1})^{N^2 - k} = (1 + O(k^2 / N^2)) \frac{(N^{2} / n)^k}{k!}(1 + O(N^2/n^2))e^{- N^2 / n} \\
        &=(1 + o(1))\frac{(N^{2} / n)^k}{k!} e^{- N^2 / n}  \,.
    \end{align*}
    We then compute \begin{align*}
        \P(X = k) = \frac{\binom{N}{k} \binom{n - N}{N - k}}{\binom{n}{N}} = \frac{(N)_k^2}{k!} \cdot \frac{(n- N)_{N-k}}{(n)_N}\,.
    \end{align*}
    First note that $(N)_k^2 = N^{2k}(1 + O(k^2 / N)) = N^{2k}(1 + o(1)).$  Second, note \begin{align*}
        \frac{(n- N)_{N-k}}{(n)_N} &= (1 + o(1))n^{-k} \cdot \frac{(n - N)_N}{(n)_N} =(1 + o(1))n^{-k} \cdot \prod_{j = 0}^{N-1} \frac{n - N -j}{n - j} = (1 + o(1)) n^{-k} (1 - N/n)^N \\
        &= (1 + o(1))  n^{-k} e^{-N^2/n}
    \end{align*}
    where in the last asymptotic equality we used that $N^3/n^2 = o(1)$.  
\end{proof}

We now need to see that typically we do not have any rows or columns in $Q$ with two or more $1$'s.  Set $\mathcal{Q} = \{\exists~i : \sum_{j} Q(i,j) \geq 2 \text{ or }\sum_j Q(j,i) \geq 2  \}\,.$

\begin{lemma}\label{lem:cQ-likely}
    Let $N = \lfloor n^{7/12} \rfloor$, and $Q$ a random $N \times N$ matrix with i.i.d.\ $\mathrm{Bernoulli}(1/n)$ entries.  Let $|Q| = \sum_{i,j} Q(i,j)$ be the total number of $1$'s.  Then uniformly in $k \leq n^{1/5}$ for $n$ sufficiently large we have \begin{equation*}
        \P( \mathcal{Q} \,|\, |Q| = k) \geq 1 - n^{-1/6}\,.
    \end{equation*} 
\end{lemma}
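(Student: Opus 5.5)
Throughout, I read $\mathcal{Q}$ as the event in the way \cref{lem:Bernoulli-comparison} uses it: the event that \emph{every} row and every column of $Q$ contains at most one $1$. (The displayed definition literally describes the complement, which is the bad event.) With this reading, the claim $\P(\mathcal{Q} \,|\, |Q| = k) \geq 1 - n^{-1/6}$ says that the good event is overwhelmingly likely. The plan is to bound the conditional probability of the complement by a union bound over pairs of $1$'s.

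\textbf{Step 1: the conditional law.} Since the entries are i.i.d., every configuration with exactly $k$ ones has probability $n^{-k}(1-1/n)^{N^2-k}$. Conditional on $|Q| = k$, the set of positions of the $1$'s is therefore a uniformly random $k$-subset of the $N^2$ cells of $[N]\times[N]$.

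\textbf{Step 2: union bound over pairs.} The complement $\mathcal{Q}^c$ occurs exactly when some pair of the $k$ chosen cells shares a row or shares a column. Under the uniform $k$-subset law, any fixed pair of chosen cells is a uniformly random pair of distinct cells. Such a pair shares a row with probability $(N-1)/(N^2-1) = 1/(N+1)$, and likewise for columns. Summing over the $\binom{k}{2}$ pairs gives
\begin{equation*}
    \P(\mathcal{Q}^c \,|\, |Q| = k) \leq 2\binom{k}{2}\frac{1}{N+1} \leq \frac{k^2}{N}\,.
\end{equation*}

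\textbf{Step 3: plug in the parameters.} For $k \leq n^{1/5}$ and $N = \lfloor n^{7/12}\rfloor \geq n^{7/12}/2$, the bound is at most
\begin{equation*}
    2 n^{2/5 - 7/12} = 2n^{-11/60}\,.
\end{equation*}
Since $11/60 > 10/60 = 1/6$, this is at most $n^{-1/6}$ once $n$ is sufficiently large. The estimate is uniform in $k \leq n^{1/5}$, so we obtain $\P(\mathcal{Q}\,|\,|Q|=k) \geq 1 - n^{-1/6}$ as claimed.

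\textbf{Where the difficulty lies.} No step is genuinely hard. The only point requiring care is the exponent check in Step 3: it is exactly where $N \ll n^{2/3}$ and the cap $k \leq n^{1/5}$ interact, and one needs $2/5 < 7/12 - 1/6$ for the union bound to give the stated rate.
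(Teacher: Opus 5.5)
Your proof is correct and follows the paper's argument: conditioning on $|Q| = k$ gives a uniform $k$-subset of cells, a union bound over the $\binom{k}{2}$ pairs with collision probability $O(1/N)$ gives $O(k^2/N) = O(n^{-11/60}) \le n^{-1/6}$. Your reading of $\mathcal{Q}$ as the good event (every row and column has at most one $1$) is the one the paper actually needs in \cref{lem:Bernoulli-comparison} and effectively uses in its own proof, and your exact value $(N-1)/(N^2-1) = 1/(N+1)$ just makes the paper's $O(1/N)$ explicit.
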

\begin{proof}
    By exchangeability of the $N^2$ entries, note that conditioned on $|Q| = k$, $Q$ is uniform on all $0$-$1$ matrices with exactly $k$ many $1$'s.  We may sample such a matrix by placing $k$ many $1$'s one-at-a-time without ever placing two in the same entry.  For each pair of $1$'s, the probability we place them in the same row or column is $O(1/N).$  By union bounding over all pairs, we see that \begin{equation*} 
    \P( \mathcal{Q}^c \,|\, |Q| = k) \leq O(k^2/N) = O(n^{-11/60})\leq n^{-1/6}\,. \qedhere \end{equation*}
\end{proof}

\begin{proof}[Proof of \cref{lem:Bernoulli-comparison}]
    Set $k = \sum_{i,j} M(i,j)$. By exchangeability we have that \begin{align}
    \P(P_{N \times N} = M) &= \P(|P_{N \times N}| = k) \P(P_{N \times N} = M \,|\,|P_{N \times N}| = k) \nonumber \\
    &= \P(|P_{N \times N}| = k) \P(Q = M \,|\,|Q| = k, \mathcal{Q})  \label{eq:bernoulli-proof-cond-1s}
    \end{align}
    where the last equality uses the fact that $(Q \,|\,|Q| = k, \mathcal{Q})$ and $(P_{N \times N} \,|\,|P_{N \times N}| = k)$ have the same distribution.  Write \begin{equation}\label{eq:expand-Q-prob}
        \P(Q = M \,|\,|Q| = k, \mathcal{Q}) = \frac{\P(Q = M)}{\P(|Q| = k, \mathcal{Q})} = \frac{\P(Q = M)}{\P(|Q| = k)\P(\mathcal{Q} \,|\, |Q| = k)} = (1 + o(1))\frac{\P(Q = M)}{\P(|P_{N \times N}| = k)}
    \end{equation}
    where in the last equality we used \cref{lem:count-bernoulli-perm} and \cref{lem:cQ-likely}.  Combining \eqref{eq:bernoulli-proof-cond-1s} and \eqref{eq:expand-Q-prob} completes the proof.
\end{proof}

\subsection{Proof of \cref{lem:gaussian-persistence}} \label{sec:brownian-sheet}

We use the following theorem of Li-Shao \cite[Theorem~2.2]{li2004lower}:
\begin{theorem} \label{th:general-gaussian-upper-bound}
    Let $(X_j)_{j \in [M]}$ be a mean-zero Gaussian vector so that $\Var(X_j) \geq x^2 $ for all $j$.  Suppose also that for each $i$ we have \begin{equation*}
        \sum_{j \in [M]} \frac{|\E[X_j X_i]  |}{\sqrt{\Var(X_j) \Var(X_i)}} \leq \frac{5}{4}\,.
    \end{equation*}
    Then \begin{equation*}
        \P( \min_{j \in [M]} X_j \geq -x) \leq e^{-M/10}\,.
    \end{equation*}
\end{theorem}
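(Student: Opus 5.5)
Since this statement is quoted from Li and Shao \cite[Theorem~2.2]{li2004lower}, the paper may simply cite it. If I had to prove it myself, I would use a direct density comparison with an independent Gaussian vector. I expect the argument below to give $e^{-\kappa M}$ for some absolute $\kappa>0$. I have not checked that it reaches the specific constant $1/10$.

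\emph{Step 1: normalize.} Set $\sigma_j^2=\Var(X_j)$ and $Y_j=X_j/\sigma_j$. Since $\sigma_j\geq x$, the event $\{\min_j X_j\geq -x\}$ is contained in $A:=\{Y_j\geq -1 \text{ for all } j\}$. The vector $Y$ is centered Gaussian with covariance $\Sigma=I+E$, where $E$ has zero diagonal. The hypothesis says $\sum_{j\ne i}|E_{ij}|\leq 1/4$ for every $i$. Since $E$ is symmetric, the Schur test (or Gershgorin) gives $\|E\|_{op}\leq 1/4$. Hence $\tfrac34 I\preceq\Sigma\preceq\tfrac54 I$, so $\Sigma$ is nondegenerate and $\Sigma^{-1}\succeq \tfrac45 I$.

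\emph{Step 2: compare densities.} Write
$$\P(Y\in A)=\det(\Sigma)^{-1/2}\int_A(2\pi)^{-M/2}e^{-y^T\Sigma^{-1}y/2}\,dy\leq \det(\Sigma)^{-1/2}\int_A (2\pi)^{-M/2}e^{-\frac{2}{5}|y|^2}\,dy .$$
The right-hand integral factorizes over coordinates and equals
$$(5/4)^{M/2}\,\P\bigl(\sqrt{5/4}\,g\geq -1\bigr)^M ,$$
where $g$ is a standard Gaussian.

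\emph{Step 3: bound the determinant.} This is where the zero diagonal of $E$ matters. Since $\operatorname{tr}E=0$, and every eigenvalue $\lambda$ of $E$ satisfies $|\lambda|\leq 1/4$, we have
$$\log\det(I+E)=\sum_i\log(1+\lambda_i)\geq -c\sum_i\lambda_i^2=-c\|E\|_F^2 .$$
Moreover $\|E\|_F^2=\sum_{i,j}E_{ij}^2\leq \sum_i \max_j|E_{ij}|\cdot\sum_j|E_{ij}|\leq M/16$. So $\det(\Sigma)^{-1/2}\leq e^{cM/32}$ with $c$ around $2/3$.

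Putting Steps 2 and 3 together gives $\P(A)\leq q^M$ with
$$q=e^{c/32}\sqrt{5/4}\;\Phi\bigl(2/\sqrt5\bigr)\approx 0.93<1 .$$

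\emph{Main obstacle: the constant.} The crude bound $\Sigma^{-1}\succeq\tfrac45 I$ loses a factor $\sqrt{5/4}$ per coordinate, and that already brings $q$ close to $e^{-1/10}\approx0.905$. To reach $1/10$, I would first use $y^T\Sigma^{-1}y\geq |y|^2-y^TEy$, which follows from $(I+E)^{-1}\succeq I-E$. This is valid because $E(I+E)^{-1}E\succeq 0$ when $\|E\|_{op}<1$. I would then handle the cross term $e^{y^TEy/2}$ on $A$ by a Hölder or exponential-tilting argument against the product measure, which loses only second-order terms in $E$.

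If the constant still does not come out, my fallback is Li--Shao's own route. It is a comparison inequality of Slepian/normal-comparison type for the orthant probability, $\P(A)\leq\prod_j\P(Y_j\geq -1)\cdot\exp\bigl(C\sum_{i\ne j}|\rho_{ij}|\bigr)$. With row sums at most $1/4$, this gives $\P(A)\leq \exp\bigl(-M(\log(1/\Phi(1))-C/4)\bigr)$, and the remaining work is tuning the constants.
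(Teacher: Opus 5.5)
The paper gives no proof of this theorem; it cites Li--Shao \cite[Theorem~2.2]{li2004lower}. As a proof of the statement as written, your proposal has a genuine gap, and it is the constant. Steps 1--3 are correct: the Schur bound, the density comparison and the trace-zero determinant estimate all check out. But they only give $\P(A)\le q^M$ with $q\approx 0.93$, roughly $e^{-0.075M}$, and this does not imply $e^{-M/10}$.

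This route cannot be tuned to reach $1/10$. Since $\operatorname{tr}\Sigma=M$, AM--GM gives $\det\Sigma\le 1$, so the prefactor $\det(\Sigma)^{-1/2}$ is always at least $1$. Hence Steps 2--3 can never beat $\bigl(\sqrt{5/4}\,\Phi(2/\sqrt5)\bigr)^M\approx 0.911^M$, and $0.911>e^{-1/10}\approx 0.905$. The loss is built into Step 2, not the determinant.

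The repairs you list are not carried out, and the ones you name are doubtful:
\begin{itemize}
\item A Hölder split of $\E[\one_A(g)e^{g^TEg/2}]$ fails. Finiteness of $\E e^{p\,g^TEg/2}$ requires $p\lambda_{\max}(E)<1$, which can force $p<4$. In the block example with eigenvalues $\pm1/4$, the best $p$ gives only about $e^{-0.04M}$.
\item The cross term is not second order in $E$. Under the product measure conditioned on $A$, each $g_j$ has mean $m=\phi(1)/\Phi(1)\approx 0.29$. By Jensen, $\E[e^{g^TEg/2}\mid A]\ge e^{m^2\mathbf 1^TE\mathbf 1/2}$, which can be as large as $e^{0.01M}$.
\item The fallback names no concrete comparison inequality. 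For it to work you would need $C\lesssim 0.29$.
\end{itemize}

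The missing idea is to split off an independent component using the \emph{lower} spectral bound instead of the upper one. Since $\lambda_{\min}(E)\ge -1/4$, the matrix $R=\Sigma-\tfrac34 I=\tfrac14 I+E$ is positive semidefinite. So $Y\overset{d}{=}\sqrt{3/4}\,G+W$, with $G$ standard Gaussian in $\mathbb{R}^M$ and $W\sim N(0,R)$ independent of $G$. Condition on $W$, put $a=2/\sqrt3$, and use the tangent-line bound for the concave function $\log\Phi$:
\[
\P(A)=\E\prod_{j=1}^M\Phi\bigl(a(1+W_j)\bigr)\le \Phi(a)^M\,\E\exp\Bigl(c\sum_{j=1}^M W_j\Bigr)=\Phi(a)^M e^{c^2\mathbf 1^TR\mathbf 1/2},\qquad c=\frac{a\,\phi(a)}{\Phi(a)}.
\]
Since $\mathbf 1^TR\mathbf 1=M/4+\sum_{i\ne j}E_{ij}\le M/2$, this gives
\[
\P(A)\le\exp\bigl(M(\log\Phi(a)+c^2/4)\bigr),
\]
with $\log\Phi(a)\approx-0.1325$ and $c^2/4\approx 0.018$. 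That is about $e^{-0.114M}\le e^{-M/10}$.

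The paper's only use of this theorem, in \cref{lem:gaussian-persistence}, needs only $e^{-\kappa M}$ for some absolute $\kappa>0$. So your $e^{-0.075M}$ would suffice there, but it does not prove the theorem as stated.
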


\begin{proof}[Proof of \cref{lem:gaussian-persistence}]
    Define $G(a,b) = \sum_{i \leq a, j\leq b} g_{i,j}$ and note that $(G(a,b))_{a,b \leq n}$ is a mean-zero Gaussian vector.  Compute \begin{align*}
        \E [G(a_1,b_1) G(a_2,b_2)] &= \E \sum_{\substack{i_1 \leq a_1, j_1 \leq b_1 \\ i_2 \leq a_2, j_2 \leq b_2}} g_{i_1,j_1} g_{i_2,j_2} = \sum_{\substack{i_1 \leq a_1, j_1 \leq b_1 \\ i_2 \leq a_2, j_2 \leq b_2}} \one\{i_1 = i_2, j_1 = j_2\} 
        = \min\{a_1, a_2\} \min\{b_1,b_2\}\,. 
    \end{align*}
    For $\rho = 400$ and each fixed $(i,j)$, bound \begin{align}
        \sum_{(a,b)} \frac{\E[G(\rho^i,\rho^j) G(\rho^a,\rho^b)]}{\sqrt{\Var(G(\rho^i,\rho^j))\Var( G(\rho^a, \rho^b))}} &= \sum_{(a,b) } \rho^{-i/2-j/2-a/2-b/2} \rho^{\min\{i,a\}\min\{j,b\}} = \sum_{(a,b) }\rho^{-|i-a|/2 - |j-b|/2}  \nonumber \\
        &\leq \left(\sum_{k \in \Z} \rho^{-|k|/2} \right)^2 = \left(\frac{1 + \rho^{-1/2}}{1 - \rho^{-1/2}} \right)^2 \leq \frac{5}{4} \label{eq:corr-bound}
    \end{align}
    where the last inequality is true for $\rho = 400.$  We now define the set $\mathcal{S}_0 = \{i : \rho^i \in [\log^3 n, n]\}$ and note that $|\mathcal{S}_0| = \Theta(\log n).$  For all $(i,j) \in \mathcal{S}_0$ we have that $\Var(G(\rho^i,\rho^j)) \geq \log^6 n \geq C^2 \log^4 n$ for $n$ sufficiently large.  By \cref{th:general-gaussian-upper-bound} we have \begin{align*}
        \P(\min_{a,b \leq n} G(a,b) \geq -C\log^2 n) \leq \P(\min_{i,j \in \mathcal{S}_0^2} G(\rho^i,\rho^j) \geq -C \log^2 n) \leq e^{-|\mathcal{S}_0|^2 / 10} = e^{-\Omega( \log^2 n)}
    \end{align*}
    where we used \eqref{eq:corr-bound} to verify the hypotheses of \cref{th:general-gaussian-upper-bound}.    
\end{proof}

\section{Lower bound} \label{sec:LB}

The goal is to prove the lower bound of \cref{th:main}.  
\begin{proposition}\label{prop:LB}
    There is a universal constant $C > 0$ so that for $n \geq 2$ the following holds.  Let $\pi, \tau \in S_n$ be independently and uniformly chosen at random. Then $$\P_n(\pi \leq \tau) \geq \exp\left(- C(\log n)^{2} \right)\,.$$
\end{proposition}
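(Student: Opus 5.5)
We follow the outline sketched in the introduction. The engine is the Johnson--Leader--Long correlation inequality (\cref{thm:JLL-FKG}): every event of the form $\{Z(S)\ge -t\}$ with $S=[a]\times[b]$ is increasing in $\pi$ and decreasing in $\tau$ in the Bruhat order, so intersections of such events have probability at least the product of the individual probabilities. The plan has four steps.

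\textbf{Step 1: reduction to a quarter.} By the symmetries of the problem, the condition on the full grid splits into four corner conditions. Reversing rows and/or columns of both matrices maps the persistence condition on one corner to the condition on another. We use $\sum_{i\le n} M(i,j)=1$ and similar identities, so that, for example,
$$Z(a,b)=Z([n]\times[b])-Z((a,n]\times[b])=-Z((a,n]\times[b]).$$
Each corner event is monotone in the same direction. Combining these with JLL gives
$$\P(\pi\le\tau)\ge \P\bigl(Z(a,b)\ge0 \text{ for all } a,b\le\lceil n/2\rceil\bigr)^4,$$
which is \cref{lem:E-equiprobable}. We now work on the box $[n/2]^2$.

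\textbf{Step 2: buying a $K\log n$ cushion.} Fix a large constant $K$ and let $L=\lceil K\log n\rceil$. Force $\pi(i)=i$ and $\tau(i)=n+1-i$ for $i\le L$; this costs probability at least $(n^{-L})^2=\exp(-2K\log^2 n)$. On this event we have $X(a,b)\ge\min(a,b,L)$ and $Y(a,b)=0$ whenever $a,b\le n/2$ and $a\le L$. This handles the region $\min(a,b)\le L$ directly. For $a,b\ge L$, the matrices restricted to the remaining rows and columns are uniform permutations of size $n-L$. There it suffices to show $\min Z'(a,b)\ge -L$, where $Z'$ is the corresponding count for the remaining permutations; we need the forced part to contribute $+L$ everywhere in this range. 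One point still needs checking: the forced event must be compatible with monotonicity, or else we apply JLL to the conditioned uniform permutations on the complement.

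\textbf{Step 3: covering by geometric rectangles.} Cover $[L,n/2]^2$ by $O(\log^2 n)$ rectangles $R=[x,5x/4]\times[y,5y/4]$. By JLL,
$$\P\Bigl(\bigcap_R\{\min_{R} Z'\ge -L\}\Bigr)\ge\prod_R \P\bigl(\min_R Z'\ge -L\bigr).$$
So it suffices to show $\P(\min_R Z'\ge -K\log n)\ge c$ uniformly in $R$, since that gives $c^{O(\log^2 n)}=\exp(-O(\log^2 n))$.

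\textbf{Step 4: a single rectangle.} For $(a,b)\in R$, decompose
$$Z(a,b)=Z(x,y)+Z([x]\times(y,b])+Z((x,a]\times[y])+Z((x,a]\times(y,b]).$$
The maxima over $b$ (respectively $a$) of the two side terms are running maxima of martingales obtained by revealing columns one at a time. Freedman's inequality (\cref{th:freedman}) bounds them by $A(\sqrt{xy/n}+\log n)$ with probability $\ge 0.9$; this is \cref{lem:upper-bound-sides}. The corner term needs the chaining argument over dyadic subrectangles: Freedman/\cref{th:Bernstein} tail bounds at each scale, union bounds over the $4^k$ points, and stopping at the scale where the variance $xy/(n4^k)$ drops to order $\log n$, where the exponential tail gives an $O(\log n)$ error. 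By Markov's inequality this gives $\max|\cdot|\le A(\sqrt{xy/n}+\log n)$ with probability $\ge0.9$.

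For the main term $Z(x,y)$, there are two cases. If $xy/n\le C_0$, the $K\log n$ cushion already exceeds everything. Otherwise, condition on $\tau$: by \cref{th:CLT}, the hypergeometric $X(x,y)$ exceeds its mean by $M\sigma$ with probability $\ge c$, and $Y(x,y)$ is below its mean plus $O(\sigma)$ with probability $\ge 1/2$. To get joint positive probability with the fluctuation events, note that all of them are monotone in the right direction: $\{Z(x,y)\ge M\sigma\}$ is increasing, and the side and corner events can be written as intersections of events $\{Z(S)\ge -t\}$. JLL then gives a product lower bound, and choosing $M>3A$ finishes the step.

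The main obstacle is the chaining bound for the corner term, since the tails are Poisson-like rather than sub-Gaussian. A secondary subtlety is checking that every event used is genuinely Bruhat-monotone so that JLL applies.
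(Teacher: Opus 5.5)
\textbf{Gap in Step 4.} Your overall architecture matches the paper's: four corners via JLL, a $\Theta(\log n)$ cushion bought for $e^{-O(\log^2 n)}$, a JLL product over $O(\log^2 n)$ geometric boxes, and a per-box split into $Z(x,y)$ plus side and corner fluctuations. The failure is in Step 4, where you glue $\{Z(x,y)\ge M\sigma\}$ to the fluctuation events using JLL. Those events are not Bruhat-monotone. $X(S)$ is monotone for rectangles anchored at a corner of $[n]\times[n]$, such as $[a]\times[b]$, but not for strips $[x]\times(y,b]$ or interior rectangles $(x,a]\times(y,b]$. Already in $S_3$, along the Bruhat chain $123<213<231<321$, the strip count $X([1]\times(1,2])=\one\{\pi(1)=2\}$ takes the values $0,1,1,0$.

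Moreover, your events are two-sided bounds on $|Z(\cdot)|$. Even for anchored $S$, $\{|Z(S)|\le t\}$ is an increasing event intersected with a decreasing one, and JLL says nothing about it. A union bound does not rescue this either. $\P(Z(x,y)\ge M\sigma)$ decays like a Gaussian tail in $M$, while Markov only makes the fluctuation events fail with probability $O(1/A)$, and you need $M>3A$.

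\textbf{Missing idea.} The paper resolves this with frame conditioning (\cref{lem:box-cond-frame}). All three fluctuation terms are functions of $M_\pi,M_\tau$ restricted to the frame $[5x/4]\times[5y/4]\setminus[x]\times[y]$. Given the frame, $X(x,y)$ is hypergeometric with parameters determined by the three frame counts, and $Y(x,y)$ is likewise hypergeometric and conditionally independent of $X(x,y)$. On the typical event $\cG_4$ that these counts are within $O(\sigma)$ of their means, the conditional mean is within $O(\sigma)$ of $xy/n$. So \cref{th:CLT} applies frame by frame, and $Z(x,y)\ge M\sigma$ holds with conditional probability at least $c(M)$ on $\cG_1\cap\cG_2\cap\cG_3\cap\cG_4$, an event of probability at least $1/4$. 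Because $M$ is chosen after the fluctuation constants, there is no circularity.

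\textbf{Error in Step 2.} Pinning only $\pi(i)=i$ and $\tau(i)=n+1-i$ for $i\le L$ misaligns the two residual permutations. The residual of $\pi$ uses columns $(L,n]$, but the residual of $\tau$ uses columns $[1,n-L]$. For $L<a,b\le n/2$ one gets $Z(a,b)=L+Z'(a-L,b-L)-\#\{L<i\le a:\ b-L<\tau(i)\le b\}$, where $Z'$ is the standard count for the relabeled pair $(\pi(\cdot+L)-L,\ \tau(\cdot+L))$. So the forced part does not contribute $+L$: the strip term has mean about $aL/n$ (up to roughly $L/2$) and can be as large as $L$. The paper pins both the first and the last $L$ positions, with $\pi$ the identity and $\tau$ the reversal there. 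Then both residual permutations occupy $(L,n-L]\times(L,n-L]$, and $Z(L+a,L+b)=L+Z'(a,b)$ exactly. Otherwise you would need an extra strip estimate inside each box.
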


We prove \cref{prop:LB} in two main steps.  First, we will use a correlation inequality to show to reduce to the product of probabilities over various boxes.  Define $x_i = (5/4)^i$ and $y_j = (5/4)^j$.  Define $k_0 = \max\{i : x_i \leq n/2\}$.  We will first show the following lower bound using a correlation inequality together with conditioning on the first and last $O(\log n)$ elements of the permutations.
\begin{lemma} \label{lem:LB-setup}
    For each $C >0$ there is a $C' > 0$ so that for $n$ sufficiently large we have
    \begin{equation*}
        \P_n(\pi \leq \tau) \geq e^{-C'(\log n)^2} \prod_{i,j \leq k_0} \P_{n- 2 C\log n}\left(\min_{(a,b) \in [x_i,5x_i/4] \times [y_j,5y_j/4]} Z(a,b) \geq - C \log n\right)^4 \,.
    \end{equation*}    
\end{lemma}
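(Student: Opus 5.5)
The plan is to combine three ingredients: the four-corner reduction of \cref{lem:E-equiprobable}, a conditioning on the first and last $m := \lceil C\log n\rceil$ rows of both permutations that creates a deterministic additive buffer of size $m$, and the Johnson--Leader--Long inequality \cref{thm:JLL-FKG}, which turns the event on the whole quadrant into a product over the boxes $[x_i,5x_i/4]\times[y_j,5y_j/4]$.

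\textbf{Step 1 (four corners).} By \cref{lem:E-equiprobable},
$$\P_n(\pi\le\tau)\ \ge\ \P_n\bigl(Z(a,b)\ge 0 \text{ for all } a,b\le \lceil n/2\rceil\bigr)^4 .$$
It therefore suffices to show
$$\P_n\bigl(Z(a,b)\ge0 \ \forall\, a,b\le\lceil n/2\rceil\bigr)\ \ge\ e^{-C'(\log n)^2/4}\prod_{i,j\le k_0}\P_{n-2m}\Bigl(\min_{[x_i,5x_i/4]\times[y_j,5y_j/4]} Z\ge -m\Bigr).$$

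\textbf{Step 2 (buffer by conditioning).} Let $\mathcal{G}$ be the event that:
\begin{itemize}
\item $\pi(i)=i$ for $i\le m$ and $\pi(n-m+i)=n-m+i$ for $i\le m$;
\item $\tau$ maps the first $m$ rows to the last $m$ columns, and the last $m$ rows to the first $m$ columns, in some fixed way.
\end{itemize}
Each of these fixes $2m$ values of a uniform permutation, so
$$\P(\mathcal{G}) = \Bigl(\frac{(n-2m)!}{n!}\Bigr)^2 \ge n^{-4m} = e^{-O(\log^2 n)} .$$
Conditionally on $\mathcal{G}$, the restrictions of $\pi,\tau$ to rows and columns $(m,n-m]$ are independent uniform permutations $\pi',\tau'$ of size $n-2m$. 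Let $Z'$ be the associated field.

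For $a,b\le n/2$ we check the buffer directly:
\begin{itemize}
\item If $\min(a,b)\le m$, then $\tau$ contributes nothing to $Y(a,b)$, so $Z(a,b)=\min(a,b)\ge0$ automatically.
\item If $a,b>m$, then $Z(a,b)=m+Z'(a-m,b-m)$.
\end{itemize}
Hence on $\mathcal{G}$, the quadrant event is implied by $\{Z'(a',b')\ge -m \text{ for all } a',b'\le n/2\}$.

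\textbf{Step 3 (correlation inequality).} The boxes $[x_i,5x_i/4]\times[y_j,5y_j/4]$ with $i,j\le k_0$ cover $[1,n/2]^2$, since $5x_{k_0}/4> n/2$. Consequently
$$\Bigl\{Z'\ge -m \text{ on } [n/2]^2\Bigr\}\ \supseteq\ \bigcap_{i,j\le k_0} E_{ij},\qquad E_{ij}:=\Bigl\{\min_{[x_i,5x_i/4]\times[y_j,5y_j/4]} Z'\ge -m\Bigr\}.$$
Each $E_{ij}$ is an intersection of events $\{Z'(a,b)\ge -m\}$. Each such event is increasing in $\pi'$ and decreasing in $\tau'$ in the Bruhat order, because Bruhat-increasing $\pi'$ only increases the corner counts $X'(a,b)$. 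Hence so is $E_{ij}$. By \cref{thm:JLL-FKG}, applied on the product space of $(\pi',\tau')$, we get
$$\P\Bigl(\bigcap_{i,j} E_{ij}\Bigr)\ \ge\ \prod_{i,j}\P_{n-2m}(E_{ij}).$$
Multiplying by $\P(\mathcal{G})$ and raising to the fourth power gives the lemma, with $C'$ depending on $C$.

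\textbf{Expected main obstacle.} The delicate point is Step 3. We must make sure \cref{thm:JLL-FKG} applies to a pair of independent permutations, with one order reversed, rather than to a single permutation. If it is only stated for a single permutation, I would apply it twice: first conditionally on $\tau'$ (in $\pi'$), then in $\tau'$, using that conditional probabilities of increasing events are monotone. A second, smaller point is to confirm that the corner counts $X'(a,b)$ are monotone under Bruhat moves in the direction matching \eqref{eq:comparability-definition}. The other steps are bookkeeping: $\mathcal{G}$ must keep the quadrant $[n/2]$ inside the range where the buffer formula holds, which needs $m\le n/2$, true for large $n$.
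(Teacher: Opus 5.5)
Your proposal is correct and is essentially the paper's proof: reduce to $\cE_1$ via \eqref{eq:LB-four-events} and \cref{lem:E-equiprobable} (Step 1 also needs the correlation inequality behind \eqref{eq:LB-four-events}, not just equiprobability); then pin $\pi$ to the identity and $\tau$ to the anti-diagonal on the first and last $m$ rows to get $Z(a,b)=m+Z'(a-m,b-m)$; finally apply \cref{cor:FKG} to the box events. Your explicit check of the case $\min(a,b)\le m$ is a detail the paper skips, and your monotonicity direction is reversed: under \eqref{eq:comparability-definition}, moving up in the Bruhat order \emph{decreases} corner counts, so each $E_{ij}$ is decreasing in $\pi'$ and increasing in $\tau'$ (the paper makes the same slip). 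This is harmless, since all the $E_{ij}$ are monotone in the same direction.
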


We will prove \cref{lem:LB-setup} in \cref{sec:LB-setup} using a correlation inequality of Johnson-Leader-Long for random permutations in the strong Bruhat order.  We then uniformly bound the probabilities in \cref{lem:LB-setup}.

\begin{lemma}\label{lem:lower-bound-one-box}
    There are constants $C,c>0$ so that for all $x,y \leq n/2$ we have \begin{equation*}
        \P\left(\min_{(a,b) \in [x,5x/4] \times [y,5y/4]} Z(a,b) \geq - C \log n\right) \geq c\,.
    \end{equation*}
\end{lemma}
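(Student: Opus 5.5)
The plan follows the decomposition from the proof outline. Write $\sigma^2 = xy/n$. For $(a,b) \in [x,5x/4]\times[y,5y/4]$,
\begin{equation*}
Z(a,b) = Z(x,y) + Z([x]\times(y,b]) + Z((x,a]\times[y]) + Z((x,a]\times(y,b])\,.
\end{equation*}
It therefore suffices to find a constant $K$ and an event $\mathcal{G}$ with $\P(\mathcal{G}) \ge 1 - \eta$, for a small $\eta$, such that the following holds. On $\mathcal{G}$ the maximum over $(a,b)$ of the absolute values of the last three terms is at most $K(\sigma + \log n)$. We also need $\P(Z(x,y) \ge K\sigma) \ge 2\eta$. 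Then $\P(\min Z(a,b) \ge -K\log n) \ge \P(Z(x,y)\ge K\sigma) - \P(\mathcal{G}^c) \ge \eta$.

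This ordering avoids any conditioning. The only subtlety is that $\eta$ must be chosen after $K$. So I would instead fix $K$ first, with $\P(\text{sides large}) \le 1/10$ say, via Markov on the expected maxima below. Then I would pick $M = 2K$ in \cref{th:CLT}, giving $\P(Z(x,y)\ge 2K\sigma) \ge c_K$.

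That is not directly comparable to $1/10$, so a better route is the following. Let $S$ be the sum of the three error terms. It is a function of the configuration outside $[x]\times[y]$ only through counts, so conditioning is awkward. I would therefore use the union bound in the form
\begin{equation*}
\P(Z(x,y)\ge K'\sigma,\ \max|S|\le K'(\sigma+\log n)) \ge \P(Z(x,y)\ge K'\sigma) - \P(\max|S|>K'(\sigma+\log n))\,.
\end{equation*}
The key point is that the tail bound on $\max|S|$ decays in $K'$. Freedman gives exponential decay $e^{-cK'}$, and chaining can be run at threshold $K'$, also giving exponential decay in $K'$. The Gaussian lower tail from \cref{th:CLT} is of order $e^{-K'^2/2}$, which decays faster. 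So a plain union bound fails, and I need positive correlation or conditioning.

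I would handle this with independence-by-conditioning. Condition on the row/column data of $\pi,\tau$ outside $[x]\times[y]$, namely which columns among $[y]$ receive rows in $[x]$. After that conditioning, $Z(x,y)$ is still a hypergeometric difference with variance comparable to $\sigma^2$ on a typical event. Alternatively, I would use the JLL correlation inequality (\cref{thm:JLL-FKG}), since $\{Z(x,y) \ge K\sigma\}$ and $\{Z([x]\times(y,b])\ge -t \ \forall b\}$ etc.\ are all increasing in $\pi$ and decreasing in $\tau$. Then FKG gives
\begin{equation*}
\P(\cdot) \ge \P(Z(x,y)\ge K\sigma)\prod \P(\text{one-sided side bounds})\,,
\end{equation*}
where each one-sided bound holds with probability at least $1/2$ for $K$ large. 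The fourth term is also a sum over boxes, hence monotone in the same direction. This is the route I would commit to. The small cases where $\sigma$ is bounded are trivial, since then all terms are $O(\log n)$ with constant probability. If $\sigma^2 < C_0$, I take the event that all counts in $[5x/4]\times[5y/4]$ are at most $C\log n$, which holds with probability at least $1/2$ by \cref{th:Bernstein}.

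For the side terms, I would use the martingale structure. Revealing $\pi$ column by column for $b\in(y,5y/4]$, $b \mapsto \widetilde X([x]\times(y,b])$ is, after an $O(\sigma)$ drift correction, a martingale with bounded increments and quadratic variation $O(\sigma^2)$. So \cref{th:freedman} bounds the running maximum by $K\sigma + K$ with probability at least $0.9$, and the same holds for $\widetilde Y$ and for the other side. The deterministic centering $|A|/n$ cancels in $Z$. The main obstacle is the corner term $\max_{a,b}|Z((x,a]\times(y,b])|$, which has two parameters, so a single martingale does not suffice. Here I would chain over dyadic grids of $(x,5x/4]\times(y,5y/4]$. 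At level $k$ the increments are box sums of variance $O(\sigma^2 2^{-k})$, and the grid has $4^k$ points. By \cref{th:Bernstein}, the tail at threshold $t_k = K\sigma 2^{-k/2}\sqrt{k+1} + K(k+1)$ is summable against $4^k$. I would stop at the level $k^* \approx \log_2(\sigma^2)$ where the variance drops to $O(1)$. The remaining within-cell oscillation is bounded by the total count in a thin strip, which is at most $O(\log n)$ with high probability by \cref{th:Bernstein} and a union bound over the $O(n^2)$ cells. This yields $\E\max|Z(\text{corner})| \le C(\sigma+\log n)$, and Markov finishes.
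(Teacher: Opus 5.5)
\textbf{Gap: the correlation inequality does not apply to your error events.} \cref{thm:JLL-FKG} (via \cref{cor:FKG}) only applies to events that are monotone in the Bruhat order, and your error events are not. By \eqref{eq:comparability-definition}, the \emph{corner} counts $X(a,b)=X([a]\times[b])$ all move in the same direction as $\pi$ moves up. A count over a rectangle not anchored at the corner behaves differently: $X([x]\times(y,b]) = X(x,b)-X(x,y)$ and $X((x,a]\times(y,b])$ are signed combinations of corner counts and need not be monotone. For instance, with $n=3$ we have $123<213<312$ in the Bruhat order. Along this chain, the number of ones of $M_\pi$ in $[1]\times(1,2]$ (the indicator of $\pi(1)=2$) is $0,1,0$. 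So the side events $\{Z([x]\times(y,b])\ge -t\ \forall b\}$ and the corner-term event are neither increasing nor decreasing, and your product lower bound is unjustified. Regrouping as $\{Z(a,b)-Z(x,y)\ge -s\}$ fails for the same reason. You have already set aside the plain union bound, so the committed argument has no valid way to combine the lower bound on $Z(x,y)$ with the error bounds.

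\textbf{What is needed instead: conditioning on the frame.} The conditioning route you mention in passing is the right one, but it must be set up carefully. Conditioning on all of $\pi,\tau$ outside $[x]\times[y]$ determines $X(x,y)=x-X([x]\times(y,n])$, and likewise $Y(x,y)$, leaving no randomness. The paper conditions only on the frame $[5x/4]\times[5y/4]\setminus[x]\times[y]$ of each permutation. All three error terms are functions of the frame. By \cref{lem:box-cond-frame}, the conditional law of $X(x,y)$ is hypergeometric with parameters given by the frame counts $m_1,m_2,m_3$. The extra input you are missing is the event $\cG_4$ that $m_1,m_2,m_3$ lie within $O(\sigma)$ of their means; this has probability at least $3/4$ by \cref{th:Bernstein}. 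On $\cG_4$, once $\sigma$ exceeds a large constant, the conditional mean of $X(x,y)$ is within $O(\sigma)$ of $xy/n$ and the conditional variance is still of order $\sigma^2$. Then \cref{th:CLT} gives $\P(\Xt(x,y)\ge M\sigma\mid\text{frame})\ge c_M$ for every fixed $M$, uniformly over frames in $\cG_1\cap\cG_2\cap\cG_3\cap\cG_4$. Independence of $\pi$ and $\tau$ handles $\widetilde{Y}(x,y)\le -M\sigma$ in the same way, and taking $M$ large compared to $C_1$ finishes.

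\textbf{What carries over.} Your small-$\sigma$ case and your Freedman and chaining estimates for the error terms are fine and plug into this argument unchanged. In particular, the additive $K(k+1)$ terms in your chaining thresholds sum to $O(K\log^2\sigma)=O(K\sigma)$, so they are harmless.
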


The proof of \cref{prop:LB} now follows quickly:

\begin{proof}[Proof of \cref{prop:LB}]
    We note that $k_0 = \Theta(\log n)$, and so combining \cref{lem:LB-setup} with \cref{lem:lower-bound-one-box} completes the proof.
\end{proof}

\subsection{Proof of \cref{lem:LB-setup}} \label{sec:LB-setup}

We note that $S_n$ under the strong Bruhat order is not a lattice and so does not satisfy the usual assumptions of the Fortuin–Kasteleyn–Ginibre (FKG) inequality.  However, a theorem of Johnson, Leader and Long \cite{johnson2020correlation} proves that the corresponding FKG inequality holds for the strong Bruhat order. Recall that $A \subset S_n$ is \emph{increasing} if whenever $\pi_1 \in A$ and $\pi_2 \geq \pi_1$ then $\pi_2 \in A$.

\begin{theorem}[Johnson-Leader-Long, \cite{johnson2020correlation}]\label{thm:JLL-FKG}
    Let $A, B \subset S_n$ be increasing events in the strong Bruhat order and let $\P$ denote the uniform probability on $S_n$.  Then $\P(A \cap B) \geq \P(A) \P(B).$
\end{theorem}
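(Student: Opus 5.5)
The plan is to prove the inequality by induction on $n$, together with a Harris/Chebyshev-type argument on a product decomposition of $S_n$. The key idea is to record a permutation $\pi\in S_n$ as a pair $(k,\sigma)$. Here $k=\pi^{-1}(n)$ is the position of the largest value, and $\sigma\in S_{n-1}$ is obtained by deleting $n$ from the one-line notation of $\pi$. The map $\Phi:(k,\sigma)\mapsto\pi$ is a bijection $[n]\times S_{n-1}\to S_n$ under which the uniform measure on $S_n$ becomes the product of the uniform measures on $[n]$ and $S_{n-1}$. Order $[n]$ \emph{in reverse}, so that $k\preceq k'$ iff $k\geq k'$, and put the product order on $[n]\times S_{n-1}$. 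The main structural claim is that $\Phi$ is order preserving. Given this, for any increasing $A\subset S_n$ the preimage $\Phi^{-1}(A)$ is increasing in the product order.

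To verify monotonicity of $\Phi$ I would check the two coordinates separately, using the rank criterion \eqref{eq:comparability-definition}.

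\emph{Fixed $k$, varying $\sigma$.} Let $r_\pi(a,b)=\#\{i\leq a:\pi(i)\leq b\}$.
\begin{itemize}
\item For $b<n$: $r_\pi(a,b)=r_\sigma(a,b)$ when $a<k$, and $r_\pi(a,b)=r_\sigma(a-1,b)$ when $a\geq k$.
\item For $b=n$: $r_\pi(a,n)=a$.
\end{itemize}
Hence $\sigma\leq\sigma'$ entrywise in the rank criterion implies $\Phi(k,\sigma)\leq\Phi(k,\sigma')$.

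\emph{Fixed $\sigma$, varying $k$.} Moving $n$ one position to the left swaps it with a smaller entry that was in front of it. This is a transposition that creates one new inversion (indeed it is an adjacent transposition raising the inversion count by $1$). So $\Phi(k-1,\sigma)\geq\Phi(k,\sigma)$. Composing the two steps gives monotonicity of $\Phi$.

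With this in hand, let $A,B\subset S_n$ be increasing, and write $A_k=\{\sigma:\Phi(k,\sigma)\in A\}\subset S_{n-1}$, and similarly $B_k$. Each section $A_k$ is increasing in $S_{n-1}$. The sections are also nested, $A_k\subseteq A_{k-1}$, because $\Phi$ is monotone in $k$ under the reversed order. Then
\begin{equation*}
\P(A\cap B)=\frac1n\sum_{k}\P_{n-1}(A_k\cap B_k)\geq\frac1n\sum_k\P_{n-1}(A_k)\P_{n-1}(B_k)\geq\Big(\frac1n\sum_k\P_{n-1}(A_k)\Big)\Big(\frac1n\sum_k\P_{n-1}(B_k)\Big).
\end{equation*}
The first inequality is the inductive hypothesis applied in $S_{n-1}$. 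The second is Chebyshev's sum inequality, which applies because $k\mapsto\P_{n-1}(A_k)$ and $k\mapsto\P_{n-1}(B_k)$ are both nonincreasing in $k$. The right-hand side equals $\P(A)\P(B)$, and the base case $n=1$ is trivial.

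The step I expect to need the most care is the monotonicity of $\Phi$ in the $\sigma$-coordinate. One must be sure that the rank-matrix comparison for $\sigma,\sigma'$ transfers exactly to $\pi,\pi'$ after inserting the row and column of $n$. This is the bookkeeping above, and one must also keep the orientation conventions of \eqref{eq:comparability-definition} consistent with the direction "moving $n$ left increases $\pi$." If the orientation turned out reversed, I would simply use the ordinary order on $[n]$ instead; the Chebyshev step only needs the two section probabilities to be similarly ordered, which holds in either case.
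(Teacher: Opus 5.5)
Your proof is correct. The paper gives no proof of this statement: it quotes Johnson--Leader--Long and uses the result as a black box. So your proposal is a self-contained argument rather than an alternative to one in the text.

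All your steps check out.
\begin{itemize}
\item For $b<n$ one has $r_{\Phi(k,\sigma)}(a,b)=r_\sigma(a-\one\{a\geq k\},b)$, and $r_{\Phi(k,\sigma)}(a,n)=a$. Hence $\Phi(k,\cdot)$ is monotone.
\item Moving $n$ from position $k$ to $k-1$ changes the rank function only at $a=k-1$. There it drops by $\one\{\sigma(k-1)\leq b\}$ for $b<n$, so $\Phi(k,\sigma)\leq\Phi(k-1,\sigma)$, with the orientation you predicted.
\item Consequently the sections $A_k$ are up-sets with $A_k\subseteq A_{k-1}$. The induction hypothesis and Chebyshev's sum inequality then finish the argument.
\end{itemize}
You only need $\Phi$ to be monotone and measure-preserving, which is all you use. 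It is not an order isomorphism: $213<312$ in the Bruhat order, but the corresponding pairs $(3,21)$ and $(1,12)$ are incomparable in the product order.

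Your induction step has the same fiber-then-base conditioning the paper uses to prove \cref{cor:FKG}. There the base is $S_n$, so one needs the Johnson--Leader--Long inequality for increasing functions, obtained via the layer-cake formula. In your setting the base $[n]$ is a chain, and Chebyshev's sum inequality is the one-line substitute.

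Iterating your decomposition shows what is really going on. Encode $\pi$ by the successive positions of $n,n-1,\dots,2$, which is essentially the inversion table. This gives a measure-preserving, order-preserving bijection from a product of chains onto $(S_n,\leq)$. Up-sets of $S_n$ pull back to up-sets of that product, so the theorem reduces to Harris's inequality for product measure on a product of chains.
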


We quickly note that for events $A,B$ we have \begin{align*}
    \P(A^c \cap B^c) - \P(A^c) \P(B^c) &= 1 - (\P(A) + \P(B) - \P(A\cap B)) - (1 - \P(A))(1 - \P(B)) \\
    &= \P(A \cap B) - \P(A) \P(B) 
\end{align*}
and so \cref{thm:JLL-FKG} holds for decreasing events as well.

\begin{corollary}\label{cor:FKG}
Let $A,B \subset S_n \times S_n$ be increasing in the first coordinate and decreasing in the second coordinate, and let $\P$ denote the uniform probability on $S_n \times S_n$.  Then $\P(A \cap B) \geq \P(A)\P(B).$
\end{corollary}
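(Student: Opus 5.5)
The plan is to reduce \cref{cor:FKG} to \cref{thm:JLL-FKG} by putting a single product partial order on $S_n \times S_n$ and checking that the FKG inequality survives taking products. Define $\preceq$ on $S_n \times S_n$ by $(\pi_1,\tau_1) \preceq (\pi_2,\tau_2)$ iff $\pi_1 \leq \pi_2$ and $\tau_1 \geq \tau_2$ in the strong Bruhat order. The events $A,B$ in the statement are then exactly the increasing events for $\preceq$. Reversing the order on the second coordinate is harmless: the map $\tau \mapsto w_0\tau$ (left multiplication by the longest element) is an order-reversing bijection of the Bruhat order and preserves the uniform measure. Alternatively, one can use directly the observation made just before the corollary that \cref{thm:JLL-FKG} also holds for decreasing events.

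The key step is the standard "product of FKG measures" argument, done by conditioning on the first coordinate. Fix $A,B$ and for each $\pi$ write $A_\pi = \{\tau : (\pi,\tau) \in A\}$ and $B_\pi$ similarly. Since $A$ is decreasing in the second coordinate, each section $A_\pi$ is a decreasing subset of $S_n$, and the same holds for $B_\pi$. The decreasing-event version of \cref{thm:JLL-FKG} gives
\[
\P_\tau(A_\pi \cap B_\pi) \geq \P_\tau(A_\pi)\,\P_\tau(B_\pi).
\]
Set $f(\pi) = \P_\tau(A_\pi)$ and $g(\pi) = \P_\tau(B_\pi)$. Since $A$ is increasing in the first coordinate, $\pi \leq \pi'$ implies $A_\pi \subseteq A_{\pi'}$, so $f$ is increasing on $(S_n,\leq)$, and likewise $g$ is increasing. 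Averaging over $\pi$ then yields
\[
\P(A\cap B) \geq \E_\pi[f g].
\]
It remains to show $\E_\pi[fg] \geq \E_\pi[f]\,\E_\pi[g] = \P(A)\P(B)$.

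This last step is the main obstacle, because \cref{thm:JLL-FKG} is stated only for indicator functions of increasing events, whereas here we need it for the increasing functions $f,g$ with values in $[0,1]$. I would handle this with the layer-cake decomposition. Write
\[
f = \int_0^1 \one\{f > s\}\,ds, \qquad g = \int_0^1 \one\{g > t\}\,dt,
\]
and note that each superlevel set $\{f>s\}$ and $\{g>t\}$ is an increasing event in $S_n$. Then
\[
\E[fg] = \int_0^1\!\!\int_0^1 \P(f>s,\, g>t)\,ds\,dt \geq \int_0^1\!\!\int_0^1 \P(f>s)\,\P(g>t)\,ds\,dt = \E f\,\E g,
\]
where the inequality applies \cref{thm:JLL-FKG} to each pair of superlevel sets. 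Since $S_n$ is finite, these integrals are really finite sums, so no measurability issues arise. Combining this with the conditioning step gives $\P(A\cap B) \geq \P(A)\P(B)$, which proves the corollary.
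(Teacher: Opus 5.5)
Your proposal is correct and follows the paper's proof: you condition on $\pi$ and apply the decreasing-event version of \cref{thm:JLL-FKG} to the sections $A_\pi, B_\pi$. You then use the layer-cake representation to pass from indicator events to the increasing functions $\pi \mapsto \P_\tau(A_\pi)$ and $\pi \mapsto \P_\tau(B_\pi)$, exactly as the paper does. The product-order and $w_0$ remarks are fine but not needed for the argument.
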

\begin{proof}
    Let $(\pi,\tau) \in S_n \times S_n$ be chosen uniformly at random.  Set $A_\pi = \{\tau : (\pi,\tau) \in A\}$ and define $B_\pi$ similarly.  Note that $A_\pi, B_\pi$ are decreasing events by assumption.  We can then write \begin{align*}
        \P(A \cap B) = \E_\pi\left[ \P_\tau(A_\pi \cap B_\pi )  \right] \geq \E_\pi[\P_\tau(A_\pi) \P_\tau(B_\pi)]
    \end{align*}
    where in the last inequality we used \cref{thm:JLL-FKG} for the decreasing events $A_\pi, B_\pi$.  We now may write \begin{align*}
        \E_\pi[\P_\tau(A_\pi) \P_\tau(B_\pi)] &= \int_0^\infty \int_0^\infty \P_\pi(\P_\tau(A_\pi) > s, \P_\tau(B_\pi) > t)\,ds\,dt \\
        &\geq \int_0^\infty \int_0^\infty \P_\pi(\P_\tau(A_\pi) > s)\P_\pi(\P_\tau(B_\pi) > t)\,ds\,dt \\
        &= \E_\pi[\P_\tau(A_\pi)] \cdot \E_{\pi}[\P_\tau(B_\pi)] \\
        &= \P(A) \P(B)
    \end{align*}
    where in the inequality we used that $A$ and $B$ are increasing in $\pi$ and applied \cref{thm:JLL-FKG}.
\end{proof}

For $N = \lceil n/2\rceil$ define the events \begin{gather*}
    \cE_1 = \left\{\min_{a,b \in[ N]\times[N]} Z(a,b) \geq 0\}\right\}\,, \quad \cE_2 = \left\{\min_{a,b \in[ n - N,n]\times[N]} Z(a,b) \geq 0\}\right\} \\
    \cE_3 = \left\{\min_{a,b \in[ N]\times[n-N,n]} Z(a,b) \geq 0\}\right\}, \quad \cE_4 = \left\{\min_{a,b \in[n - N,n]\times[n-N,n]} Z(a,b) \geq 0\}\right\}\,.
\end{gather*}

Since $\{\pi \leq \tau\} = \cE_1 \cap \cE_2 \cap \cE_3 \cap \cE_4$ we have \begin{equation}\label{eq:LB-four-events}
    \P_n(\pi \leq \tau) = \P(\cE_1 \cap \cE_2 \cap \cE_3 \cap \cE_4) \geq \prod_{j = 1}^4 \P(\cE_j)
\end{equation}
where in the last inequality we used \cref{cor:FKG}.  
We now show that the events $\cE_j$ have the same probability. \begin{lemma}\label{lem:E-equiprobable}
    We have $\P(\cE_1) = \P(\cE_2) = \P(\cE_3) = \P(\cE_4)$.
\end{lemma}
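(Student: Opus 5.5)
The plan is to exhibit, for each $j \in \{2,3,4\}$, a measure-preserving bijection of $S_n \times S_n$ that carries $\cE_j$ onto $\cE_1$. We only use two facts: reversing the rows or the columns of a permutation matrix gives another permutation matrix, so these operations preserve the uniform measure; and swapping $\pi$ and $\tau$ also preserves the uniform measure on $S_n \times S_n$.

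\textbf{Key identity.} Every column of a permutation matrix contains exactly one $1$, so $X([n]\times[b]) = b = Y([n]\times[b])$. Hence
\begin{equation*}
Z(a,b) = Z([n]\times[b]) - Z((a,n]\times[b]) = -Z((a,n]\times[b]).
\end{equation*}
The same argument with rows gives $Z(a,b) = -Z([a]\times(b,n])$. Using both identities in turn, we also get $Z(a,b) = Z((a,n]\times(b,n])$.

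\textbf{Case $\cE_2$.} Let $R$ be the row reversal $i \mapsto n+1-i$, and let $M'_\pi$, $M'_\tau$ be the row-reversed matrices. Under $R$ the set $(a,n]\times[b]$ is mapped to $[n-a]\times[b]$. Therefore
\begin{equation*}
Z_{(\pi,\tau)}(a,b) = -\bigl(X'(n-a,b) - Y'(n-a,b)\bigr) = Z_{(\tau',\pi')}(n-a,b),
\end{equation*}
where $\tau',\pi'$ are the row-reversed permutations and the roles of the two permutations have been swapped. As $a$ ranges over $[n-N,n]$, the index $n-a$ ranges over $[0,N]$. The endpoint $a=n$ (equivalently $n-a=0$) contributes $Z = 0$, so it never violates the constraint and can be dropped. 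Thus $(\pi,\tau)\in\cE_2$ if and only if $(\tau',\pi')\in\cE_1$. The map $(\pi,\tau)\mapsto(\tau',\pi')$ is a bijection of $S_n\times S_n$, so $\P(\cE_2)=\P(\cE_1)$.

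\textbf{Case $\cE_3$.} This is identical to $\cE_2$, using column reversal and the identity $Z(a,b) = -Z([a]\times(b,n])$. Alternatively, transpose both matrices, i.e.\ pass to $(\pi^{-1},\tau^{-1})$, which maps $\cE_3$ to $\cE_2$.

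\textbf{Case $\cE_4$.} Use the identity $Z(a,b) = Z((a,n]\times(b,n])$ together with the $180^\circ$ rotation (reversing both rows and columns). The two sign flips cancel, so no swap of $\pi$ and $\tau$ is needed, and the rotation maps $\cE_4$ onto $\cE_1$.

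\textbf{Main point of care.} There is no real obstacle; the only detail is the bookkeeping of the ranges. One checks that $[n-N,n]$ corresponds exactly to $[0,N]$ under $a\mapsto n-a$, with the extra index $0$ harmless since $Z(0,\cdot)=Z(\cdot,0)=0$. With $N=\lceil n/2\rceil$ both ranges have the stated form, so the transformed event coincides exactly with $\cE_1$.
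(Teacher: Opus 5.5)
Your proof is correct and follows essentially the same route as the paper: the complement identity $Z(a,b) = -Z((a,n]\times[b])$, row reversal combined with the symmetry $Z \overset{d}{=} -Z$ (swapping $\pi$ and $\tau$), transposition for $\cE_3$, and reversal of both rows and columns for $\cE_4$. Your version is a bit more careful than the paper's on the bookkeeping. You note the harmless index $n-a=0$, and you make explicit the identity $Z(a,b)=Z((a,n]\times(b,n])$ that the $\cE_4$ case implicitly needs.
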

\begin{proof}
    For all $a,b$ we have that $X([a] \times [b]) = b - X((a,n] \times [b])$. This means that \begin{align*}
        \min_{a,b \in[ n - N,n]\times[N]} Z(a,b) = \min_{a,b \in[ n - N,n]\times[N]}  (-Z((n-a,n] \times [b]))\,.
    \end{align*}
    However, by replacing the first $a$ rows with the last $a$ rows, we have that \begin{equation*}
        \min_{a,b \in[ n - N,n]\times[N]}  (-Z((n-a,n] \times [b])) \overset{d}{=} \min_{a' \times b \in [N]\times[N]}  (-Z(a',b)) \overset{d}{=} \min_{a \times b \in [N]\times[N]}  Z(a',b) 
    \end{equation*}
    where in the second equality we used that $Z$ and $-Z$ have the same distribution.  This shows $\P(\cE_1) = \P(\cE_2)$.  By taking the transpose we see $\P(\cE_2) = \P(\cE_3)$.  Replacing the first $N$ rows with the last $N$ rows and similarly for the columns shows $\P(\cE_1) = \P(\cE_4).$
\end{proof}

We are now ready to prove \cref{lem:LB-setup}: 
\begin{proof}[Proof of \cref{lem:LB-setup}]
    By \eqref{eq:LB-four-events} and \cref{lem:E-equiprobable} it is sufficient to show that \begin{equation*}
        \P(\cE_1) \geq  e^{-C'(\log n)^2} \prod_{i,j \leq k_0} \P_{n - 2C\log n}\left(\min_{(a,b) \in [x_i,5x_i/4] \times [y_j,5y_j/4]} Z(a,b) \geq - C \log n\right)\,.
    \end{equation*}
    Recall that $Z(a,b) = X(a,b) - Y(a,b)$.  Let $\pi$ denote the permutation corresponding to $X$ and $\tau$ the permutation corresponding to $Y$.  Define the events \begin{align*}
        \cG' := \{\pi(j) = j\ \forall\ j \in [C \log n] \cup [n - C\log n,n]\} \cap \{\tau(j) = n - j+1 \ \forall\ j \in [C \log n] \cup [n - C\log n,n]\}
    \end{align*}
    and compute $\P(\cG') = n^{-(4 + o(1))C\log n} = \exp(-(4C + o(1))\log^2n)$.   If we condition on $\cG'$,the remaining portion of the permutations $\pi$ and $\tau$ are uniform at random permutations on $n - 2 C \log n$ many elements.  Let $\pi'$ and $\tau'$ denote these permutations.  
    For $i,j \leq k_0$ define \begin{align*}
        \cG_{i,j} = \left\{\min_{(a,b) \in [x_i,5x_i/4] \times [y_j,5y_j/4]} Z_{\pi',\tau'}(a,b) \geq - C \log n \right\}
    \end{align*}
    and note that $Z_{\pi',\tau'}(a,b) = Z_{\pi,\tau}([C\log n , C\log n + a] \times [C\log n, C\log n + b]) = Z_{\pi,\tau}(C\log n + a , C\log n + b) -  C\log n$.  Thus we have \begin{align*}
        \P(\cE_1) &\geq \P(\cG '\cap \bigcap_{i,j \leq k_0} \cG_{i,j}) = e^{-(4 + o(1)C) \log^2 n} \P( \bigcap_{i,j \leq k_0} \cG_{i,j} \,|\,\cG') \\
        &\geq e^{-C'(\log n)^2} \prod_{i,j \leq k_0} \P_{n - 2C\log n}\left(\min_{(a,b) \in [x_i,5x_i/4] \times [y_j,5y_j/4]} Z(a,b) \geq - C \log n\right)
    \end{align*}
    where in the last inequality we used \cref{cor:FKG}.
\end{proof}

\subsection{Proof of \cref{lem:lower-bound-one-box}}

In order to prove \cref{lem:lower-bound-one-box}, we will break the box $[a] \times [b]$ into four sections, and write
\begin{align}
    Z(a,b) &= Z(x,y) + Z([x] \times [y,b]) + Z([x,a] \times [y]) + Z([x,a] \times [y,b]) \nonumber \\
    &\geq Z(x,y) - \max_{b \in [y,(5/4)y]} |Z([x] \times [y,b])| - \max_{a \in [x,(5/4)x]} |Z([x,a] \times [y])| \nonumber \\
    &\qquad - \max_{(a,b) \in [x,(5/4)x] \times[y,(5/4)y]} |Z([x,a] \times [y,b])|  \label{eq:rectangle-breakdown} \,. 
\end{align}

We will first prove that in expectation, the last three terms are of order $\sqrt{xy/n} + \log n$.  We then will show that with constant probability $Z(x,y)$ is positive enough to make up for it.  We handle the middle two terms in \eqref{eq:rectangle-breakdown} with Freedman's inequality:

\begin{lemma}\label{lem:upper-bound-sides}
    There is a universal constant $C > 0$ so that for all $x,y \leq n/2$ we have $$\E \max_{b \in[y,(5/4) y] }|\Xt([x] \times [y,b])| \leq C\left(\sqrt{\frac{xy}{n}} + 1\right) \,.  $$
\end{lemma}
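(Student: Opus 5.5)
We will realize $b \mapsto \Xt([x] \times [y,b])$ as a martingale in $b$ and apply Freedman's inequality (\cref{th:freedman}) to the running maximum. Reveal the columns of $M_\pi$ one at a time, starting at column $y$ and moving right. Let $\mathcal{F}_b$ be the $\sigma$-field generated by columns $1,\dots,b$; equivalently, by the values $\pi^{-1}$ restricted to these columns. Let $R_b$ be the number of rows in $[x]$ not yet used by columns $1,\dots,b$, so that $R_b = x - X(x,b)$. Conditionally on $\mathcal{F}_{b-1}$, column $b$ has its $1$ in $[x]$ with probability $p_b = R_{b-1}/(n-b+1)$.

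Writing $\eta_b = M_\pi([x]\times\{b\})$, the process $D_b = \sum_{k=y}^{b} (\eta_k - p_k)$ is therefore a martingale with increments bounded by $1$. Its quadratic variation satisfies $V \le \sum_k p_k \le (xy/4)/(n/4) \cdot O(1) = O(xy/n)$, since $b \le 5n/8$ and $R \le x$. Applying \cref{th:freedman} with $s^2 = Cxy/n$ gives the tail
\[
\P\Bigl(\max_b |D_b| \ge t\Bigr) \le 2\exp\bigl(-c\min\{t^2 n/(xy),\, t\}\bigr).
\]
Integrating this tail yields $\E\max_b |D_b| \le C(\sqrt{xy/n}+1)$.

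It remains to compare $D_b$ with $\Xt([x]\times[y,b]) = \sum_{k=y}^{b}(\eta_k - x/n)$. The difference is the drift $\sum_{k=y}^{b} (p_k - x/n)$, and we have
\[
p_k - \frac{x}{n} = \frac{R_{k-1} - x}{n-k+1} + x\Bigl(\frac{1}{n-k+1} - \frac1n\Bigr) = \frac{-X(x,k-1)}{n-k+1} + \frac{x(k-1)}{n(n-k+1)}.
\]
Rewriting $-X(x,k-1) + x(k-1)/n = -\Xt(x,k-1)$, each summand equals $-\Xt(x,k-1)/(n-k+1)$. Summing over at most $y/4$ values of $k$, each with denominator at least $n/4$, bounds the drift by
\[
\frac{y}{n}\max_{k\le 5y/4}|\Xt(x,k)|.
\]

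To control this maximum, run the same Freedman argument starting from column $1$ instead of column $y$. The resulting martingale has quadratic variation $O(xy/n)$. Its own drift term involves $\Xt(x,k)$ again, but with coefficient $O(y/n) \le O(1)\cdot 5/8$, so the bound closes. Concretely, set $S = \max_{k \le 5y/4}|\Xt(x,k)|$. The decomposition gives $S \le \max|D'| + (Cy/n) S$, where $D'$ is the martingale started at column $1$. Since $y \le n/2$, the constant is at worst $\tfrac{5}{8}\cdot 4$, which may exceed $1$; to fix this, use the sharper bound $\sum_{k\le b} 1/(n-k+1) \le \log(n/(n-b)) \le \log(8/3) < 1$. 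This gives $\E S \le C(\sqrt{xy/n}+1)$, and the drift in the original problem is then $\le \E S$, again of the required order.

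The main obstacle is exactly this drift control: its coefficient must be strictly below $1$ for the self-bounding step to work. If it is not, I would fall back on the symmetry $X([x]\times[b]) = b - X((x,n]\times[b])$ (which reduces $x \le n/2$ cases), or simply expose rows rather than columns. An alternative route is a direct Gronwall-type argument: write $\Xt(x,b)$ as a martingale plus $-\sum \Xt(x,k-1)/(n-k+1)$, and solve the linear recursion explicitly, which shows $\Xt(x,b) = (n-b)\sum_k \xi_k/(n-k)$ with martingale increments $\xi_k$. The prefactors $(n-b)/(n-k)$ are bounded in $[3/8, 1]$, so Freedman applies directly to this normalized martingale, and the difference $\Xt([x]\times[y,b]) = \Xt(x,b) - \Xt(x,y)$ is handled in the same way. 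I expect this exact-solution route to be the cleanest.
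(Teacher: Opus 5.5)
Your argument is correct and follows the same route as the paper (expose columns one at a time, apply Freedman's inequality to the running maximum, integrate the tail). It is more careful at one point. The paper simply asserts that $j \mapsto \Xt([x]\times[y,y+j])$ is a martingale, but, as you compute, its conditional increments carry the drift $-\Xt(x,k-1)/(n-k+1)$. Your identity $\Xt(x,b) = (n-b)\sum_{k \le b} \xi_k/(n-k)$ (i.e.\ $\Xt(x,b)/(n-b)$ is a martingale, with prefactors in $[3/8,1]$ since $b \le 5n/8$), or alternatively your self-bounding step using $\log(8/3) < 1$, is exactly what makes the Freedman argument rigorous.

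Once $\E \max_{k \le 5y/4} |\Xt(x,k)| \le C(\sqrt{xy/n}+1)$ is in hand, the lemma follows directly from $\Xt([x]\times[y,b]) = \Xt(x,b) - \Xt(x,y-1)$. So your first martingale $D_b$, started at column $y$, is not actually needed.
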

\begin{proof}
    Define $M_j = \Xt([x] \times [y,y+j])$.  Note that $M_j$ is a martingale with bounded increments.  Further, as in the proof of \cref{th:Bernstein}, we have that the variance of the increments is bounded by $C x/n$.  In particular, \cref{th:freedman} shows \begin{equation*}
        \P\left(\max_{b \in[y,(5/4) y] }|\Xt([x] \times [y,b])| \geq t \sqrt{\frac{xy}{n}} \right) \leq \exp\left(- c\min\{t^2, t \sqrt{xy/n}\}  \right)\,.
    \end{equation*}
    Integrating gives \begin{align*}
       \E \max_{b \in[y,(5/4) y] }|\Xt([x] \times [y,b])| &\leq  \sqrt{\frac{xy}{n}} + \sqrt{\frac{xy}{n}} \int_1^\infty \exp(- c\{ t^2, t \sqrt{xy/n}\})\,dt \leq C\left(\sqrt{\frac{xy}{n}} + 1\right)\,.  \qedhere
    \end{align*}
\end{proof}

Handling the last term in \eqref{eq:rectangle-breakdown} is significantly trickier.  We prove an upper bound on the expectation via a chaining approach:

\begin{lemma}\label{lem:upper-bound-max-chaining}
    There is a universal constant $C> 0$ so that for all $x,y  \leq n/2$ we have $$\E \max_{(a,b)  \in [x,(5/4)x] \times [y,(5/4)y]} |\Xt([x,a] \times [y,b])| \leq C\left(\sqrt{x y / n} + \log n\right) \,.$$ 
\end{lemma}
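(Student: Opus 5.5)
We use a chaining argument over dyadic sub-rectangles, truncated at the scale where the exponential part of the Bernstein tail takes over. Set $x' = \lfloor x/4\rfloor$, $y'=\lfloor y/4\rfloor$, so every rectangle $[x,a]\times[y,b]$ in the statement is a subrectangle $(x, x+s]\times(y,y+t]$ with $s\le x'$, $t \le y'$. Write $\sigma^2 = xy/n$. For $k\ge 0$ let $s_k = \lfloor s\, 2^{k}/x'\rfloor x'2^{-k}$ (and similarly $t_k$) be the dyadic approximation of $(s,t)$ at scale $k$. The telescoping identity
\begin{equation*}
\Xt((x,x+s]\times(y,y+t]) = \Xt((x,x+s_0]\times(y,y+t_0]) + \sum_{k=0}^{K-1}\big[\Xt(R_{k+1}) - \Xt(R_k)\big] + \big[\Xt(R) - \Xt(R_K)\big]
\end{equation*}
then holds, with $R_k = (x,x+s_k]\times(y,y+t_k]$. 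Each difference $R_{k+1}\setminus R_k$ is a union of at most two boxes, one of size at most $x'2^{-k-1}\times y'$ and one of size at most $x'\times y'2^{-k-1}$. By \cref{lem:count=hypergeom} and \cref{th:Bernstein} (note $x,y\le n/2$, so the hypotheses hold after trivially adjusting constants), each such box $D$ satisfies
\begin{equation*}
\P\big(|\Xt(D)|\ge u\big)\le 2\exp\!\Big(-\tfrac1{16}\min\{u^2 2^{k}/\sigma^2, u\}\Big).
\end{equation*}
At scale $k$ there are at most $4^{k+1}$ distinct such boxes. Take $u_k = A\sigma 2^{-k/4}(k+1)$ plus a term handling the linear regime. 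Union bounding, and using that the $u_k$ are summable with $\sum_k u_k = O(\sigma)$, the Gaussian regime contributes $O(\sigma)$ in expectation by a standard tail integration.

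We stop the chain at $K$ equal to the largest $k$ with $\sigma^2 2^{-k} \ge \log n$, or $K=0$ if $\sigma^2<\log n$. Up to that scale the variance proxy $\sigma^2 2^{-k}$ is at least $\log n$. The thresholds needed for the union bound are then of order $\sqrt{\sigma^2 2^{-k} k}\lesssim \sigma^2 2^{-k}$, where the last step uses $k\lesssim \log n$, which holds because $2^K\le\sigma^2\le n$. So we stay in the sub-gaussian part of the Bernstein bound and the sum is $O(\sigma)$. The linear-regime contributions at level $k$, of size about $k$, need care: summing them naively gives $K^2 \sim \log^2 n$. To avoid this we take thresholds $u_k = A(\sigma 2^{-k/2}\sqrt{k+1} + k+1)$ only when needed. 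Since $\sigma 2^{-k/2}\sqrt{k}\ge k$ for $k\le K$ by the choice of $K$, the linear part is dominated, and the sum is still geometric in $\sigma2^{-k/4}$, giving $O(\sigma)$.

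The main obstacle is the final remainder term $\sup_{(s,t)}|\Xt(R)-\Xt(R_K)|$, which is a supremum over a continuum of fine rectangles. We bound it by monotonicity rather than by chaining. The set $R\setminus R_K$ is contained in two strips of width at most $x'2^{-K}$ (respectively $y'2^{-K}$) adjacent to dyadic gridlines, so
\begin{equation*}
|\Xt(R)-\Xt(R_K)|\le X(S) + |S|/n,
\end{equation*}
where $S$ ranges over at most $2\cdot 2^{K}$ full strips of the form $(x+ix'2^{-K}, x+(i+1)x'2^{-K}]\times(y,y+y']$ and their transposes. Each strip has mean $|S|/n\lesssim \sigma^2 2^{-K}\lesssim \log n$. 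Moreover $X(S)$ is hypergeometric, so \cref{th:Bernstein} in the exponential regime gives
\begin{equation*}
\P\big(X(S)\ge C\log n\big)\le n^{-3}.
\end{equation*}
A union bound over the $O(n)$ strips, together with the trivial bound $|\Xt|\le n$ on the exceptional event, yields $\E\max \le C\log n$ for this term. Combining the coarse term, the chain, and the remainder gives $C(\sigma+\log n)$.
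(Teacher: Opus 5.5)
Your proposal is correct and follows essentially the same route as the paper: dyadic chaining over sub-rectangles using the Bernstein/Freedman tail of \cref{th:Bernstein}, stopped at the scale where the increment variance $\sigma^2 2^{-k}$ becomes small, with the leftover fine-scale error controlled by the exponential tail plus a polynomial-size union bound. The differences are only technical. The paper first reduces to $\max_{a\le x,b\le y}|\Xt(a,b)|$ with $x\le y$ powers of two, stops at $\sigma^2 2^{-k}\ge 1$ rather than $\ge \log n$, and handles the remainder by a direct union bound over all $n^4$ pairs $(a,b),(a_K,b_K)$ rather than by your monotone domination by $O(n)$ strips. Your worry about the linear terms $\sum_{k\le K}(k+1)$ is also unnecessary: $K\lesssim \log\sigma$, so $K^2\lesssim \sigma+1$.
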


We prove \cref{lem:upper-bound-max-chaining} in \cref{sec:chaining}.  For a universal constant $C_1 > 0$ define the events \begin{align*}
    \cG_1 &:= \left\{ \max_{b \in[y,(5/4) y] }|Z([x] \times [y,b])| \leq C_1 \left(\sqrt{x y / n} + \log n\right) \right\} \\
    \cG_2 &:=  \left\{ \max_{a \in[x,(5/4) x] }|Z([x,a] \times [y])| \leq C_1 \left(\sqrt{x y / n} + \log n\right) \right\} \\
    \cG_3 &= \left\{\max_{(a,b)  \in [x,(5/4)x] \times [y,(5/4)y]} |Z([x,a] \times [y,b])| \leq C_1 \left(\sqrt{x y / n} + \log n\right)\right\}
\end{align*}

By Markov's inequality together with \cref{lem:upper-bound-sides} and \cref{lem:upper-bound-max-chaining}, we may choose $C_1$ large enough so that \begin{equation}
    \P(\cG_1 \cap \cG_2 \cap \cG_3) \geq 1/2\,.
\end{equation}
In order to show that $Z(x,y)$ can be large even conditioned on $\cG_1 \cap \cG_2 \cap \cG_3$, we identify the conditional distribution for the number of $1$'s in a box of a random permutation conditioned on a portion out of the box.

\begin{lemma}\label{lem:box-cond-frame}
    Let $P$ be a random permutation matrix.  For $x_1 \leq x_2$ and $y_1 \leq y_2$, we have \begin{gather*}
       \left( |P_{[x_1] \times [y_1]}| \,|\, P_{[x_2] \times [y_2] \setminus [x_1] \times [y_1]} \right) \sim \hypergeom\left( n - (x_2 - x_1) - (y_2 - y_1) + m_3, y_1 - m_2, x_1 - m_1 \right) \\
        \text{ where }\quad  m_1 = |P_{[x_1] \times (y_1,y_2]}|, m_2 = |P_{(x_1,x_2] \times [y_1]}|, m_3 = |P_{(x_1,x_2] \times (y_1,y_2]}|\,.
    \end{gather*}
\end{lemma}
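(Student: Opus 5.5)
The plan is to reduce to the unconditioned statement, \cref{lem:count=hypergeom}, applied to a smaller uniformly random permutation matrix obtained by deleting the rows and columns used by the observed frame. First, describe the conditioning data. Let $F = [x_2]\times[y_2] \setminus [x_1]\times[y_1]$. Knowing $P_F$ is the same as knowing, for each row $i \in (x_1,x_2]$, whether its $1$ lies in a column of $[y_2]$ and which one. Likewise, for each column $j \in (y_1,y_2]$, we know whether its $1$ lies in a row of $[x_2]$ and which one.

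Next, identify which rows and columns are still "free." The rows in $(x_1,x_2]$ all have their status fully determined: their $1$ either lies in $F$, or it lies outside $[y_2]$ entirely, and in the latter case it cannot be in $[x_1]\times[y_1]$ either. So these $x_2-x_1$ rows contribute nothing to $|P_{[x_1]\times[y_1]}|$. The same holds for the $y_2-y_1$ columns in $(y_1,y_2]$.

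Now consider the deletion step. Delete the rows $(x_1,x_2]$ and columns $(y_1,y_2]$, together with the columns and rows matched to them by the permutation. Concretely:
\begin{itemize}
\item The rows in $(x_1,x_2]$ whose $1$ lies in $(x_1,x_2]\times(y_1,y_2]$ (there are $m_3$ of them) are matched to deleted columns.
\item The $m_2$ rows whose $1$ lies in $(x_1,x_2]\times[y_1]$ are matched to known columns in $[y_1]$, which we remove.
\item Symmetrically, the $m_1$ columns in $(y_1,y_2]$ with a $1$ in $[x_1]$ remove $m_1$ known rows of $[x_1]$.
\end{itemize}
The remaining rows (respectively columns) in $(x_1,x_2]$ (respectively $(y_1,y_2]$) have their partners outside $[y_2]$ (respectively $[x_2]$), at unknown locations.

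The key claim is that, conditionally on $P_F$, the restriction of $P$ to the surviving rows and columns is a uniformly random bijection between two sets of equal size. We must also account for the unmatched deleted rows and columns. The $(x_2-x_1) - m_2 - m_3$ rows with partner outside $[y_2]$ are matched to some columns in $(y_2,n]$. So the cleaner formulation is to condition on slightly more. Conditioning additionally on the full positions of the $1$'s in rows $(x_1,x_2]$ and columns $(y_1,y_2]$, the rest is a uniform permutation matrix on the complementary rows and columns. The effective size is
\[
n - (x_2-x_1) - (y_2-y_1) + m_3,
\]
since the $m_3$ $1$'s lying in both strips are double-counted.

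In that reduced matrix, the block corresponding to $[x_1]\times[y_1]$ has $x_1 - m_1$ rows and $y_1 - m_2$ columns. By \cref{lem:count=hypergeom}, its count is $\hypergeom(n-(x_2-x_1)-(y_2-y_1)+m_3,\, y_1-m_2,\, x_1-m_1)$. This law depends on the extra conditioning only through $m_1,m_2,m_3$, which are functions of $P_F$. Hence it survives when we average the extra information out by the tower property.

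The main obstacle is bookkeeping the uniformity claim: one must check that, given where the strip rows and columns are matched, the remaining bijection is uniform. This follows since each completion corresponds to exactly one permutation and all permutations are equally likely. One must also check that the extra conditioning does not distort the law, which the tower-property argument handles.
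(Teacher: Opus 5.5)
Your proof is correct and takes essentially the same route as the paper. After removing the strip rows $(x_1,x_2]$ and columns $(y_1,y_2]$, the $x_1-m_1$ surviving rows of $[x_1]$ are matched uniformly into an available column set of size $n-(x_2-x_1)-(y_2-y_1)+m_3$, of which $y_1-m_2$ lie in $[y_1]$; you invoke \cref{lem:count=hypergeom} on the reduced uniform permutation where the paper argues by exchangeability directly. Your extra conditioning on the full positions of the $1$'s in both strips, followed by the tower property, also makes explicit a step the paper leaves implicit: its available set $[n]\setminus(\pi((x_1,x_2])\cup(y_1,y_2])$ is not determined by the frame $P_F$ alone.
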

\begin{proof}
    Let $\pi\in S_n$ be the permutation corresponding to $P$, meaning row $j$ has its $1$ in column $\pi(j)$.  We view $\pi$ as a map from the set of rows to the set of columns. Define
    $$
    S_1=[x_1],\qquad S_2=(x_1,x_2],\qquad T_1=[y_1],\qquad T_2=(y_1,y_2].
    $$
    Condition on the pattern of $P$ on $[x_2]\times[y_2]\setminus([x_1]\times[y_1])$. This reveals: which rows of $S_1$ map into $T_2$ (there are $m_1$ of them);  which columns of $T_1$ are already used by rows in $S_2$ (there are $m_2$ of them); and
    which rows of $S_2$ map into $T_2$ (there are $m_3$ of them).
    
    Let $S_0\subset S_1$ be the set of rows not mapped into $T_2$ and note that $|S_0|=x_1-m_1$.
    We then see that all rows in $S_0$ must map into the set of columns
    $$
     C := [n]\setminus\left(\pi(S_2)\cup T_2 \right)\,.
    $$
    Since $|\pi(S_2)|=|S_2|=x_2-x_1$ and $|\pi(S_2)\cap T_2|=m_3$, the principle of inclusion-exclusion gives 
    $$
    |C| = n-(x_2-x_1)-(y_2-y_1)+m_3.
    $$
    Among the available columns $C$, the ones lying in $T_1$ are exactly $C_T := T_1 \setminus \pi(S_2)$ and so $|C_T|=y_1-m_2$.
    
    By exchangeability, the image $\pi(S_0)$ is a uniformly random $(x_1 - m_1)$-subset of $C$, and $|P_{[x_1] \times [y]}| = |\pi(S_0) \cap C_T|$.  This shows  $$\left( |P_{[x_1] \times [y_1]}| \,|\, P_{[x_2] \times [y_2] \setminus [x_1] \times [y_1]} \right) \sim \hypergeom(|C|, |C_T|, |S_0|)$$
    which completes the proof.
\end{proof}

Set $\sigma^2 = ab/n$ and for a constant $C > 0$ define $\cG_4^X$ via \begin{align*}
    \cG_4^X &:= \left\{ \left|X([a] \times (b,5b/4]) - \frac{ab}{4n} \right| \leq C\sigma \right\} \cap  \left\{ \left||X({(a,5a/4] \times [b]}) - \frac{ab}{4n} \right| \leq C\sigma \right\}  \\
    &\qquad\cap \left\{ \left|X((a,5a/4] \times (b,5b/4]) - \frac{ab}{16n} \right| \leq C\sigma \right\}\,.
\end{align*}
Define $\cG_4^Y$ analogously for $Y$, and set $\cG_4 = \cG_4^X \cap \cG_4^Y$.  
By \cref{th:Bernstein} and \cref{lem:count=hypergeom} we may increase $C$ so that $\P(\cG_4^c) \leq \frac{1}{4}$ and so \begin{equation}
    \P(\cG_1 \cap \cG_2 \cap \cG_3 \cap \cG_4) \geq 1/4\,.
\end{equation}

\begin{lemma}
    For each $M,C > 0$ there is a $c > 0$ so that \begin{equation*}
        \P( Z(a,b) \geq M \sigma - C\,|\,\cG_1 \cap \cG_2 \cap \cG_3 \cap \cG_4) \geq c\,. 
    \end{equation*} 
\end{lemma}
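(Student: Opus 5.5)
The plan is to condition on everything the events $\cG_1,\dots,\cG_4$ depend on and then use the explicit conditional law from \cref{lem:box-cond-frame}. (I read the statement with $(a,b)=(x,y)$ and $\sigma^2=xy/n$.) Let $\mathcal{F}$ be the $\sigma$-field generated by $P_\pi$ and $P_\tau$ restricted to the frame $[5x/4]\times[5y/4]\setminus [x]\times[y]$. Each of $\cG_1,\cG_2,\cG_3,\cG_4$ involves only sums over rectangles disjoint from $[x]\times[y]$ and contained in $[5x/4]\times[5y/4]$, so it is $\mathcal{F}$-measurable. It therefore suffices to prove that on $\cG_1\cap\cG_2\cap\cG_3\cap\cG_4$ we have, almost surely,
$$\P\big(Z(x,y)\ge M\sigma-C \,\big|\, \mathcal{F}\big)\ge c.$$

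\textbf{Step 1: conditional law.} Conditionally on $\mathcal{F}$, $X(x,y)$ and $Y(x,y)$ are independent, since $\pi$ and $\tau$ are independent. By \cref{lem:box-cond-frame} with $(x_1,y_1,x_2,y_2)=(x,y,5x/4,5y/4)$, each is hypergeometric with parameters
$$\big(n-x/4-y/4+m_3,\; y-m_2,\; x-m_1\big),$$
where the $m_i$ are read off from the respective permutation. On $\cG_4$ we have $m_1,m_2=\frac{xy}{4n}+O(\sigma)$ and $m_3=\frac{xy}{16n}+O(\sigma)$.

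From this I will derive three facts.
\begin{itemize}
\item Since $x,y\le n/2$, we have $m_1\le x/2$ and $m_2\le y/2$ once $\sigma$ is large. (When $\sigma$ is bounded, the same holds with $m_i=O(1)$ after trimming.) So the population size is $\Theta(n)$ and at most $\tfrac34$ of it.
\item The conditional variances of $X(x,y)$ and $Y(x,y)$ are both $\Theta(\sigma^2)$, by \eqref{eq:hypergeom-mean-variance}.
\item The conditional means $\mu_X,\mu_Y$ satisfy $|\mu_X-\mu_Y|\le K\sigma$ for a constant $K$ depending only on the constant in $\cG_4$. Indeed, the mean $(x-m_1)(y-m_2)/(n-x/4-y/4+m_3)$ has partial derivatives in $m_1,m_2,m_3$ bounded by $O(y/n)$, $O(x/n)$ and $O(xy/n^2)$, all $O(1)$, and the $m_i$ for $\pi$ and $\tau$ differ by $O(\sigma)$.
\end{itemize}

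\textbf{Step 2: large-variance case.} Suppose $\sigma^2\ge C_0$ with $C_0$ large. Apply \cref{th:CLT} with $M'=(M+K+1)/c_1$, where $\sigma_X\ge c_1\sigma$. This gives $\P(X-\mu_X\ge (M+K+1)\sigma\mid\mathcal{F})\ge c$. For $Y$, Cantelli's inequality gives $\P(Y-\mu_Y\le \sigma_Y\mid\mathcal{F})\ge 1/2$, and $\sigma_Y\le \sigma$ up to a constant that can be absorbed into $K$. By conditional independence, with probability at least $c/2$ we get
$$Z(x,y)\ge \mu_X-\mu_Y+(M+K+1)\sigma-\sigma\ge M\sigma.$$

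\textbf{Step 3: small-variance case, the main nuisance.} Suppose $\sigma^2<C_0$. Then all the relevant means are $O(1)$, and \cref{th:CLT} is unavailable. If $M\sigma\le C$, it suffices that $Y(x,y)=0$. This has conditional probability at least $(1-p)^{x}$-type bounds, which are $\ge e^{-O(\mu_Y)}\ge c$. If instead $M\sigma> C$, then $\sigma\ge C/M$ is bounded below, so $\mu_X$ is bounded below by a positive constant. We need $X(x,y)\ge k:=\lceil M\sqrt{C_0}\rceil$ and $Y(x,y)=0$. For a hypergeometric with mean in a compact subset of $(0,\infty)$, population at most $\tfrac34$ of the total drawn from, $\P(X\ge k)$ is bounded below: this follows from a direct computation of the point mass at $k$, which is comparable to a Poisson($\mu_X$) mass when draws are $o(n)$. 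In the regime where $x$ or $y$ is order $n$, $\sigma^2$ is large, so that regime does not arise here. Multiplying the two conditional probabilities gives the constant $c$.

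The main obstacle I expect is making the comparison of conditional means and variances in Step 1 fully uniform, particularly the edge cases where $\sigma$ is of order $1$.
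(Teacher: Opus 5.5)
Your Steps 1 and 2 are the paper's argument: condition on the frame via \cref{lem:box-cond-frame}, show the conditional means lie within $O(\sigma)$, then use \cref{th:CLT} and conditional independence of $X$ and $Y$. Your check that the conditional variance is $\Theta(\sigma^2)$ fills a step the paper leaves implicit.

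The gap is in the second case of Step 3. The claim that ``in the regime where $x$ or $y$ is order $n$, $\sigma^2$ is large'' is false, because $\sigma^2 = xy/n$ stays bounded when $x = O(1)$ and $y \asymp n$. Take $x = 1$ and $y = \lfloor n/2 \rfloor$, so $\sigma^2 \approx 1/2$. Then $X(x,y) \le 1$, so $\P(X(x,y) \ge k \mid \mathcal{F}) = 0$ for your $k = \lceil M\sqrt{C_0}\rceil \ge 2$. The Poisson comparison breaks because the number of draws $y - m_2$ is of order $n$ while there is a single red object. Moreover, $\cG_4$ with its large constant does not exclude $m_1 = x$, which makes $\mu_X = 0$. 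This cannot be patched, since $|Z(x,y)| \le \min(x,y)$: the event $\{Z(x,y) \ge M\sigma - C\}$ is empty whenever $M\sigma - C > \min(x,y)$, e.g.\ for $M = 10$, $C = 1$ in this example. So the lemma, read literally as holding for every $M, C > 0$ uniformly over $x, y \le n/2$, is false, and your Step 3 is trying to prove something untrue.

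What is true is the version where $C$ may be taken large depending on $M$. That is all \cref{lem:lower-bound-one-box} needs, since there the available slack is $C\log n$ with $C$ at your disposal. Your threshold $C_0$ from Step 2 depends only on $M$ and the $\cG_4$ constant. So imposing $C \ge M\sqrt{C_0}$ gives $M\sigma - C < 0$ whenever $\sigma^2 < C_0$, and your first case then finishes the proof. In that regime, $xy < C_0 n$ together with $x,y \le n/2$ forces $x + y \le n/2 + O(\sqrt{n})$. Hence every factor in the product formula for $\P(Y(x,y) = 0 \mid \mathcal{F})$ is at least $1/3$, and the product is at least $e^{-12C_0}$.

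The paper's proof tacitly makes the same choice of $C$. Its Markov-inequality treatment of $\sigma < C_0$ only gives $Z \ge -2C_0$, which yields $Z \ge M\sigma - C$ only when $C \ge (M+2)C_0$. Also, as an unconditional bound of $1/2$, it needs a larger threshold to survive conditioning on an event of probability $1/4$. With $C$ chosen this way, your conditional bound on $\{Y(x,y) = 0\}$ handles the small-variance regime more cleanly than the paper does.
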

\begin{proof}
    First note that if $\sigma$ is smaller than a constant $C_0$ then we have $\P(|Z(a,b)| \leq 2 C_0) \geq  1/2$ by Markov's inequality.  We may thus assume that $\sigma > C_0$ for some $C_0$ depending on $M$ to be chosen later.
    For $P = M_\pi$, conditioned on $P_{[5a/4] \times [5b/4] \setminus [a] \times [b]}$, \cref{lem:box-cond-frame} tells us that $|P_{[a] \times [b]}|$ is a hypergeometric random variable.  By \cref{th:CLT}, it is sufficient to show that for all possible patterns $P_{[(5/4)a] \times [(5/4)b] \setminus [a] \times [b]}$ in $\cG_4$ we have that the conditioned mean of $|P_{[a] \times [b]}|$ is at most $O(\sigma)$ from the unconditional mean of $P_{[a] \times [b]}$.  Using the notation of \cref{lem:box-cond-frame}, define \begin{equation*}
        m_1 = |P_{[a] \times (b,5b/4]}|, \quad  m_2 = |P_{(a,5a/4] \times [b]}|, \quad m_3 = |P_{(a,5a/4] \times (b,5b/4]}|\,.
    \end{equation*}
    Write \begin{align*}
        \widetilde{m}_1 = m_1 - \frac{ab}{4n}, \quad \widetilde{m}_2 = m_2 - \frac{ab}{4n}, \quad \widetilde{m}_3 = m_3 - \frac{ab}{16n}
    \end{align*}
    and note that on $\cG_4$ we have that $|\widetilde{m}_j| \leq C\sigma$ for all $j \in \{1,2,3\}.$  Set $A = a - \frac{ab}{4n}, B = b - \frac{ab}{4n}$ and $N = n - \frac{a}{4} - \frac{b}{4} + \frac{ab}{16n}$ and note first that $\frac{AB}{N} = \frac{ab}{n}.$  
    By  \cref{lem:box-cond-frame}, the distance from the conditional mean of $|P_{[a_1] \times [b_1]}|$ to the unconditional mean is \begin{align*}
        \left|\frac{(A - \widetilde{m}_1)(B - \widetilde{m}_2)}{N + \widetilde{m}_3} - \frac{AB}{N} \right| &\leq \left|\frac{AB}{N+\widetilde{m}_3} - \frac{AB}{N} \right| + \left|\frac{A \widetilde{m}_2}{N + \widetilde{m}_3}\right| + \left|\frac{B \widetilde{m}_1}{N + \widetilde{m}_3}\right| + \left|\frac{\widetilde{m}_1 \widetilde{m}_2}{N + \widetilde{m}_3}\right| \\ 
        &\leq 4C \sigma\,.
    \end{align*}
    Seeking to apply \cref{th:CLT}, recall that we have assumed $\sigma \geq C_0$; taking $C_0$ large enough so that we may apply \cref{th:CLT} shows that we have $$\P(\Xt(a,b) \geq 2 M \sigma \,|\,\cG_1 \cap \cG_2 \cap \cG_3 \cap \cG_4) \geq c\,.$$
    Arguing similarly for $\widetilde{Y}$ completes the proof.
\end{proof}

\subsection{Proof of \cref{lem:upper-bound-max-chaining}} \label{sec:chaining}
For simplicity, write $\Xt(a,b) = \Xt([a] \times [b]).$
We have the distributional equality $$\left(\Xt([x,a] \times [y,b]) \right)_{(a,b) \in[x,(5/4)x] \times [y,(5/4)y]} \overset{d}{=} \left(\Xt(a,b) \right)_{(a,b) \in [x/4] \times [y/4]}\,.$$

  By potentially increasing the constant $C$ in the statement, it is sufficient to show that for $x \leq y \leq n/4$ each given by a powers of $2$ that we have 
    
\begin{equation}\label{eq:chaining-new}
    \E \max_{a \leq x, b \leq y } |\Xt(a,b)| \leq C\left(\sqrt{x y / n} + \log n\right)\,.
\end{equation}
Write $x = 2^{N_x}$, $y = 2^{N_y}$.  For each $k \leq N_y$ define the set of points \begin{equation}
    \mathcal{S}_k = \left\{ i \cdot \frac{x}{2^{k \wedge N_x}},j \cdot \frac{y}{2^{k}} \right\}_{i \leq 2^{k \wedge N_x}, j \leq 2^k}\,.
\end{equation}
Define $K = N_y \wedge \max\{k : \frac{xy}{n 2^k} \geq 1\}$.  For $(a,b) \in [x]\times[y]$ and $k \leq K$ we may choose $(a_k,b_k) \in \mathcal{S}_k$ so that 
\begin{equation} \label{eq:a_k-choice}
    |a_k - a| \leq \frac{x}{2^{k\wedge N_x}}, \quad |b_k - b| \leq \frac{y}{2^k}    \,.
\end{equation}
Write \begin{align*}
    |\Xt(a,b)| \leq |\Xt(a_K,b_K)| + |\Xt(a,b) - \Xt(a_K,b_K)|
\end{align*}
which implies \begin{equation} \label{eq:chaining-split-scales}
    \E \max_{a \leq x, b \leq y } |\Xt(a,b)| \leq \E \max_{(a_K,b_K) \in \mathcal{S}_K} |\Xt(a_K,b_K)| + \E \max_{a \leq x, b \leq y} \min_{(a_K,b_K) \in \mathcal{S}_K} |\Xt(a,b) - \Xt(a_K,b_K)|\,.
\end{equation}

We begin with the second term.

\begin{lemma}\label{lem:chaining-sparse-part}
    In the notation above, there is a universal constant $C > 0$ so that for $n \geq 2$ we have $$\E \max_{a \leq x, b \leq y} \min_{(a_K,b_K) \in \mathcal{S}_K} |\Xt(a,b) - \Xt(a_K,b_K)| \leq C \log n\,.$$
\end{lemma}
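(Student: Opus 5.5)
The plan is to control the fine-scale fluctuation by a crude deterministic comparison with expectations, plus one union bound over the cells of the grid $\mathcal{S}_K$. Fix $(a,b)\in[x]\times[y]$ and take $(a_K,b_K)\in\mathcal{S}_K$ to be the grid point obtained by rounding each coordinate \emph{down*} to the grid. Then $[a]\times[b]\setminus[a_K]\times[b_K]$ is contained in the union of two strips:
\[
R_1=(a_K,a_K+\Delta_x]\times[y],\qquad R_2=[x]\times(b_K,b_K+\Delta_y],
\]
with $\Delta_x = x/2^{K\wedge N_x}$ and $\Delta_y = y/2^K$. Since $X$ is a nonnegative count and $X(a,b)-X(a_K,b_K)$ is a sum over a subset of $R_1\cup R_2$, we get
\[
|\Xt(a,b)-\Xt(a_K,b_K)| \le X(R_1)+X(R_2) + \frac{|R_1|+|R_2|}{n}.
\]
So it suffices to bound
\[
\E\max\bigl(X(R)+|R|/n\bigr),
\]
the maximum running over the at most $2\cdot 2^{2K}$ strips $R$ of the form $R_1$ or $R_2$ indexed by grid points.

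Next, I would check that every such strip has mean $|R|/n = O(1)$. The key input is the choice of $K$.
\begin{itemize}
\item For $R_2$: $|R_2|/n = xy/(n2^K)$.
\begin{itemize}
\item If $K<N_y$, the maximality in the definition of $K$ gives $xy/(n2^{K+1})<1$, so $|R_2|/n<2$.
\item If $K=N_y$, then $\Delta_y=1$ and $|R_2|/n = x/n\le 1$.
\end{itemize}
\item For $R_1$: $|R_1|/n = xy/(n2^{K\wedge N_x})$.
\begin{itemize}
\item If $K\le N_x$, this is the same quantity as for $R_2$.
\item Otherwise $\Delta_x=1$ and $|R_1|/n = y/n\le 1$.
\end{itemize}
\end{itemize}
(Here I use $x\le y\le n/4$ and that $x,y$ are powers of $2$.) By \cref{lem:count=hypergeom}, each $X(R)$ is hypergeometric with mean $\mu_R\le 2$, and both side lengths are at most $n/4$.

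Then apply \cref{th:Bernstein}. Since $ab/n=\mu_R\le 2$, for $t\ge 32$ the minimum in the exponent is $t$, giving $\P(X(R)\ge \mu_R+t)\le 2e^{-t/16}$. Union bounding over the $O(4^K)\le O(n^2)$ strips (note $2^K\le xy/n\le n$) yields
\[
\P\Bigl(\max_R X(R)\ge 2+t\Bigr)\le O(n^2)e^{-t/16}.
\]
Integrating in $t$ gives $\E\max_R X(R)\le C\log n$, and the deterministic term contributes $O(1)$. Adding the $R_1$ and $R_2$ contributions proves the lemma.

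The main obstacle is the bookkeeping of the boundary cases in the definition of $K$: the cases $K=N_y$ and $K>N_x$, and the degenerate situation $xy/n<1$ where $K$ may be $0$ or undefined. In each case one must verify that the strips still have $O(1)$ mean and that $|\mathcal{S}_K|$ is polynomial in $n$. When $xy/n<1$, I would simply take $K=0$: the whole box then has mean less than $1$, so the same Bernstein argument applies directly with a single set.
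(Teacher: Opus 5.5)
Your proof is correct, but it takes a slightly different route from the paper's.

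\textbf{The paper's route.} The paper works with the signed difference directly. For each $(a,b)$ and a nearby grid point $(a_K,b_K)$, it writes the symmetric difference of $[a]\times[b]$ and $[a_K]\times[b_K]$ as at most two rectangles of mean $O(1)$. It applies \cref{th:Bernstein} to get an $e^{-t/32}$ tail for $|\Xt(a,b)-\Xt(a_K,b_K)|$. It then union bounds crudely over all (at most $n^4$) pairs of points before integrating the tail.

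\textbf{Your route.} You use that $X$ is a nonnegative count to dominate the difference deterministically by $X(R_1)+X(R_2)+O(1)$. The strips depend only on the grid point, so this is a bracketing argument, and the union bound runs over only $O(2^K)$ fixed sets rather than over all pairs.

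\textbf{Comparison.} Since $\log(n^4)$ and $\log 2^K$ are both $O(\log n)$, the smaller union bound buys nothing in the final estimate. Your case analysis does make explicit two edge cases that the paper's one-line area bound passes over quickly:
\begin{itemize}
\item When $K>N_x$, the $a$-strip has area $y$ rather than $xy/2^K$. This is harmless since $y/n\le 1/4$.
\item When $xy/n<1$, $K$ is not defined by the maximum. Here your direct bound by $X(x,y)+xy/n$ suffices.
\end{itemize}
Conversely, the paper's argument does not use positivity, so it applies verbatim to signed sums.

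\textbf{A small fix.} Rounding down needs grid points with a zero coordinate. If $\mathcal{S}_K$ is read as excluding them, round up instead. Then $[a]\times[b]\subseteq[a_K]\times[b_K]$, and the difference lies in $(a_K-\Delta_x,a_K]\times[y]\cup[x]\times(b_K-\Delta_y,b_K]$, which are again grid-indexed strips of mean $O(1)$. The same argument then goes through unchanged.
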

\begin{proof}
    To begin with, note that for each $a \leq x, b \leq y$ there is some $(a_K,b_K) \in \mathcal{S}_K$ so that \begin{equation}\label{eq:a_K-aspects}
        |a_K - a| \leq \frac{x}{2^{K \wedge N_x}} \quad  \text{and} \quad |b_K - b| \leq \frac{y}{2^K}. 
    \end{equation} As such, if $(a,b)$ and $(a_K,b_K)$ satisfy \eqref{eq:a_K-aspects}, then we may write the symmetric difference of the rectangles $[a] \times[b]$ and $[a_K,b_K]$ as the union of at most two rectangles of area at most $\frac{xy}{2^K}$. Note that $\frac{xy}{2^K n} \leq 2$ by assumption.  By \cref{th:Bernstein}, for all $t \geq 2$ we may thus bound \begin{equation*}
        \P\left(|\Xt(a,b) - \Xt(a_K,b_K)| \geq t \right)  \leq 4 \exp\left( - \frac{t}{32} \right)\,.
    \end{equation*}
    Since there are at most $n^4$ choices for $(a,b), (a_K,b_K)$ we obtain \begin{equation*}
        \P\left( \max_{a \leq x, b \leq y} \min_{(a_K,b_K) \in \mathcal{S}_K} |\Xt(a,b) - \Xt(a_K,b_K)|  \geq t\right) \leq 4n^4 \exp\left(- \frac{t}{32}\right)\,.
    \end{equation*}
    Upper bounding the expectation by integrating the tail probability we see \begin{align*}
       \E \max_{a \leq x, b \leq y} \min_{(a_K,b_K) \in \mathcal{S}_K} |\Xt(a,b) - \Xt(a_K,b_K)| &\leq 128 \log n + \int_{128 \log n}^\infty 4 n^4 \exp\left(- \frac{t}{32}\right)\,dt \\
       &= O(\log n) \,.\qedhere 
    \end{align*}
\end{proof}

We now handle the first term in \eqref{eq:chaining-split-scales}.

\begin{lemma}\label{lem:chaining-gaussian-part}
    There is a universal constant $C > 0$ so that for $n \geq 2$ we have 
    $$\E \max_{(a_K,b_K) \in \mathcal{S}_K} |\Xt(a_K,b_K)|  \leq C \left(\sqrt{\frac{xy}{n}} + \log n\right) \,.$$
\end{lemma}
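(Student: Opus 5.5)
We prove this by a standard chaining argument along the nested nets $\mathcal{S}_0 \subset \mathcal{S}_1 \subset \cdots \subset \mathcal{S}_K$. Here $\mathcal{S}_{k}\subset\mathcal{S}_{k+1}$ because each grid refines the previous one: the spacings halve, or stay fixed in the $a$-coordinate once $k \geq N_x$. For each point $p=(a_K,b_K)\in\mathcal{S}_K$ we choose successive approximations $p_k=\pi_k(p)\in\mathcal{S}_k$, taking $p_K=p$ and $p_k$ a nearest point of $\mathcal{S}_k$ to $p_{k+1}$ in the sense of \eqref{eq:a_k-choice}. We then telescope
\begin{equation*}
|\Xt(p)| \leq |\Xt(p_0)| + \sum_{k=0}^{K-1} |\Xt(p_{k+1}) - \Xt(p_k)|\,,
\end{equation*}
which gives
\begin{equation*}
\E\max_{p\in\mathcal{S}_K}|\Xt(p)| \leq \E|\Xt(x,y)| + \sum_{k=0}^{K-1}\E\max_{(q,q')\in L_k}|\Xt(q)-\Xt(q')|\,,
\end{equation*}
where $L_k$ is the set of links $(\pi_k(q'),q')$ with $q'\in\mathcal{S}_{k+1}$. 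Note that $\mathcal{S}_0=\{(x,y)\}$, and $|L_k|\leq|\mathcal{S}_{k+1}|\leq 4^{k+1}$. The first term is at most $\sqrt{\Var X(x,y)}\leq\sqrt{xy/n}+1$ by \eqref{eq:hypergeom-mean-variance}.

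For a single link, the symmetric difference of $[a_{k+1}]\times[b_{k+1}]$ and $[a_k]\times[b_k]$ is a union of at most two rectangles. One has side $\leq x/2^{k\wedge N_x}$ by $\leq y$, the other $\leq x$ by $\leq y/2^k$, so each has area at most $xy/2^{k}$. Here we use $x\leq y$: once $k>N_x$ the $a$-coordinate no longer moves, so the area bound $xy/2^k$ persists. Each piece is a hypergeometric count by \cref{lem:count=hypergeom}, with $\Xt$ its centering, and since $x,y\leq n/4$, \cref{th:Bernstein} applies. Writing $\sigma_k^2 = xy/(n2^k)$, each link difference therefore satisfies
\begin{equation*}
\P(|\Xt(q)-\Xt(q')|\geq t)\leq 4\exp\left(-\tfrac{1}{64}\min\{t^2/\sigma_k^2, t\}\right).
\end{equation*}
A union bound over $4^{k+1}$ links and integration of the tail then gives the maximal inequality for mixed sub-gaussian/sub-exponential tails:
\begin{equation*}
\E\max_{L_k}|\Xt(q)-\Xt(q')| \leq C\left(\sigma_k\sqrt{k+1} + (k+1)\right).
\end{equation*}

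It remains to sum over $k\leq K$. The sub-gaussian parts give
\begin{equation*}
\sum_k \sqrt{k+1}\,\sqrt{xy/n}\,2^{-k/2}\leq C\sqrt{xy/n}\,,
\end{equation*}
which is the desired main term. The sub-exponential parts give $\sum_{k\leq K}(k+1)=O(K^2)$, and $K=O(\log n)$, so naively this is $O(\log^2 n)$. That is too big, and it is the main obstacle. The fix is to use that $k\leq K$ forces $\sigma_k\geq 1$, so the exponential term matters only at depth. More precisely, the bound $\sigma_k\sqrt{k}+k$ can be sharpened to $\sigma_k\sqrt{k}$ whenever $\sigma_k^2\geq k$. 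So split at
\begin{equation*}
k_1=\max\{k:\sigma_k^2\geq k+1\}.
\end{equation*}
For $k\leq k_1$ the links contribute only $C\sigma_k\sqrt{k+1}$, which sums to $O(\sqrt{xy/n})$ as above.

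For the remaining range $k_1<k\leq K$ we have $1\leq\sigma_k^2< k+1$. Since $\sigma_k^2$ halves at each step, this range has length $O(\log(k_1+2))=O(\log\log n)$. Its contribution is therefore $O(\log n\cdot\log\log n)$, which is still slightly too much. To remove the $\log\log n$ factor, the plan is to skip these scales entirely: chain directly from $\mathcal{S}_{k_1}$ to $\mathcal{S}_K$ in one step. Each link then has variance $\leq 2\sigma_{k_1}^2=O(\log n)$, and a union bound over at most $n^2$ points yields, via \cref{th:Bernstein} with $t=C\log n$, an expected maximum of $O(\sqrt{\log n\cdot\log n}+\log n)=O(\log n)$. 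Combining the three pieces proves the lemma.
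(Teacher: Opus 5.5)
Your proof is correct. It differs from the paper in how it handles the sub-exponential regime, and the obstacle you work around is not really there.

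The paper chains along the same nets all the way to $\mathcal{S}_K$. At level $k$, however, it bounds the link maximum at the deliberately generous threshold $\theta\sigma 2^{-k/8}=\theta\sigma_k 2^{3k/8}$ (with $\sigma^2=xy/n$, $\sigma_k=\sigma 2^{-k/2}$), rather than at $\sigma_k\sqrt{k}+k$. Because $k\le K$ forces $\sigma_k\ge 1$, for $\theta\ge 1$ the Bernstein exponent is at least $c\,\theta 2^{3k/8}$. This beats the $16^k$ union bound in both tail regimes at once, so each level costs $C\sigma 2^{-k/8}$ and the levels sum geometrically, with no case split.

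Your per-level bound $C(\sigma_k\sqrt{k+1}+(k+1))$ is sharper than the paper's, and it already suffices on its own. Since $\sigma_K\ge 1$, you have $2^K\le\sigma^2$, so $K\le 2\log_2\sigma$ and $\sum_{k\le K}(k+1)=O(\log^2\sigma)=O(\sigma)$. The apparent $\log^2 n$ came only from bounding $K$ by $O(\log n)$ instead of $O(\log\sigma)$.

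Consequently, your split at $k_1$ (which should be capped at $K$) and the single collapsed link over at most $n^2$ points are valid. The bound $\sigma_{k_1}^2<2(k_1+2)=O(\log n)$ does give the claimed $O(\log n)$ for that step. But the split is unnecessary, and it adds an $O(\log n)$ term that the lemma merely tolerates. The paper's argument, or your unsplit sum, gives $O(\sqrt{xy/n})$ whenever $xy/n\ge 2$. The case $xy/n<2$ must be handled separately via the argument of \cref{lem:chaining-sparse-part}, as the paper does; you should do this too.

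One place where you are more careful than the written proof: choosing genuine nearest points freezes the $a$-coordinate for $k\ge N_x$. This is needed for the area bound $xy/2^k$, since for $k>N_x$ even a unit shift in $a$ produces a strip of area $y>xy/2^k$.
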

\begin{proof}
    Note first that if $\frac{xy}{n} \leq 2$ then the argument in \cref{lem:chaining-sparse-part} shows an upper bound of $\log n$.  We may thus assume without loss of generality that $\frac{xy}{n} \geq 2$\,.  Letting $(a_k,b_k)$ denoting the choice from \eqref{eq:a_k-choice} we may write the telescoping sum:
    \begin{equation*}
        \Xt(a_K,b_K) = \Xt(a_0,b_0) + \sum_{k = 1}^{K} (\Xt(a_k,b_k) - \Xt(a_{k-1},b_{k-1}))\,. 
    \end{equation*}
    and so \begin{equation}\label{eq:chaining-telescope}
        \E \max_{(a_K,b_K) \in \mathcal{S}_K}|\Xt(a_K,b_K)| \leq  \E |\Xt(a_0,b_0)| + \sum_{k = 1}^{K} \E \max_{(a_k,b_k), (a_{k-1},b_{k-1}) \in \eqref{eq:a_k-choice} } |\Xt(a_k,b_k) - \Xt(a_{k-1},b_{k-1})| 
    \end{equation}
    where in the maximum we write $(a_k,b_k), (a_{k-1},b_{k-1}) \in \eqref{eq:a_k-choice}$ to denote that $(a_k,b_k) \in \mathcal{S}_k, (a_{k-1},b_{k-1}) \in \mathcal{S}_{k-1}$ and both satisfy \eqref{eq:a_k-choice} for the same $(a,b)$.

    We will show that there is an absolute constant $C > 0$ so that 
    \begin{equation} \label{eq:k-hierarchy}
         \E \max_{(a_k,b_k), (a_{k-1},b_{k-1}) \in \eqref{eq:a_k-choice} } |\Xt(a_k,b_k) - \Xt(a_{k-1},b_{k-1})| \leq C 2^{-k/8} \sqrt{\frac{xy}{n}} 
    \end{equation}
    for all $k \in [0,K]$ (where we interpret $\mathcal{S}_{-1} = \emptyset$). For pairs $(a_k,b_k),(a_{k-1},b_{k-1}) \in \eqref{eq:a_k-choice}$ we have that $|a_k - a_{k-1}| \leq \frac{4x}{2^{k \wedge N_x}}$ and $|b_k - b_{k-1}| \leq \frac{4y}{2^{k}}.$  In particular, the symmetric difference of the rectangles $[a_k,b_k]$ and $[a_{k-1},b_{k-1}]$ may be written as the union of at most two rectangles of area at most $4\frac{xy}{2^k}$.  Thus for a universal constant $c > 0$ if we set $\sigma^2 = xy/n$
    \begin{align*}
        \P\left(|\Xt(a_k,b_k) - \Xt(a_{k-1},b_{k-1})| \geq \theta \sigma 2^{-k/8}  \right) &\leq \exp\left(-c \min\left\{\frac{\theta^2 \sigma^2 / 2^{k/4}}{\sigma^2/ 2^k}, \theta \sigma 2^{-k/8} \right\}\right)  \\
        &\leq \exp\left(- c \min\left\{ \theta^2 2^{3k/4}, \theta 2^{3k/8} \right\} \right) 
    \end{align*}
    where in the second line we used that $\sigma^2 2^{-k} \geq 1$ by the definition of $K$.  There are at most, say, $16^k$ such choices for $(a_k,b_k),(a_{k-1},b_{k-1})$ and so we have  \begin{align*}
        \P\left( \max_{(a_k,b_k), (a_{k-1},b_{k-1}) \in \eqref{eq:a_k-choice} } |\Xt(a_k,b_k) - \Xt(a_{k-1},b_{k-1})|  \geq \theta \sigma 2^{-k/8} \right) \leq 16^{k} \exp\left(- c \min\left\{ \theta^2 2^{3k/4}, \theta 2^{3k/8} \right\} \right) \,.
    \end{align*}
    We may then bound \begin{align*}
        \E  \max_{(a_k,b_k), (a_{k-1},b_{k-1}) \in \eqref{eq:a_k-choice} }& |\Xt(a_k,b_k) - \Xt(a_{k-1},b_{k-1})|  \\
        &\leq \sigma 2^{-k/8} + \sigma 2^{-k/8} \int_{1}^\infty 16^k \exp\left(- c \min\left\{ \theta^2 2^{3k/4}, \theta 2^{3k/8} \right\} \right)
        \,d\theta \\
        &\leq C \sigma 2^{-k/8}
    \end{align*}
    for some absolute constant $C > 0$\,.  This establishes \eqref{eq:k-hierarchy}.  Combining with \eqref{eq:chaining-telescope} completes the proof.
\end{proof}

\begin{proof}[Proof of \cref{lem:upper-bound-max-chaining}]
    The lemma is equivalent to \eqref{eq:chaining-new}, which follows from combining \cref{lem:chaining-sparse-part} and \cref{lem:chaining-gaussian-part} with \eqref{eq:chaining-split-scales}.
\end{proof}

\section*{Acknowledgments}
\noindent M.M. is supported in part by NSF CAREER grant DMS-2336788.  Both authors are supported in part by NSF grants DMS-2137623 and DMS-2246624. The authors thank Igor Pak for bringing this problem to their attention.

\bibliographystyle{abbrv}
\bibliography{refs}

\end{document}